\documentclass[11pt]{article}
\usepackage{fullpage}
\usepackage{amsmath,amssymb,amsthm}
\usepackage[english]{babel}
\usepackage[left=3cm,right=3cm,top=3cm,bottom=4cm]{geometry}

\usepackage{tikz,tikz-cd}
\usetikzlibrary{positioning}
\usepackage{braket}
\usepackage{mathtools,stmaryrd}

\SetSymbolFont{stmry}{bold}{U}{stmry}{m}{n}
\usepackage{lmodern,bm,bbm,mathrsfs}
\usepackage{manfnt}

\usepackage[textsize=tiny]{todonotes}

\usepackage[backref=page, colorlinks=true, linkcolor=blue, citecolor=red, menucolor=green]{hyperref}
\setcounter{tocdepth}{1}

\linespread{1.25}
\setlength{\footnotesep}{1.25\footnotesep}

\theoremstyle{plain}
\newtheorem*{theorem}{Theorem}
\newtheorem*{proposition}{Proposition}
\newtheorem*{lemma}{Lemma}
\newtheorem*{corollary}{Corollary}

\theoremstyle{definition}
\newtheorem*{definition}{Definition}
\newtheorem*{example}{Example}
\newtheorem*{remark}{Remark}

\renewcommand{\hat}[1]{\widehat{#1}}
\newcommand{\cat}[1]{\mathsf{#1}}

\renewcommand{\vec}[1]{\bm{#1}}
\renewcommand{\tilde}[1]{\widetilde{#1}}
\renewcommand{\bar}[1]{\overline{#1}}
\newcommand{\bC}{\mathbb{C}}
\newcommand{\bE}{\mathbb{E}}
\newcommand{\bF}{\mathbb{F}}
\newcommand{\bk}{\mathbbm{k}}
\newcommand{\bL}{\mathbb{L}}
\newcommand{\bP}{\mathbb{P}}
\newcommand{\bQ}{\mathbb{Q}}
\newcommand{\bR}{\mathbb{R}}
\newcommand{\bT}{\mathbb{T}}
\newcommand{\bZ}{\mathbb{Z}}
\newcommand{\cE}{\mathcal{E}}
\newcommand{\cF}{\mathcal{F}}

\newcommand{\cO}{\mathcal{O}}
\newcommand{\cV}{\mathcal{V}}
\newcommand{\cW}{\mathcal{W}}
\newcommand{\fB}{\mathfrak{B}}
\newcommand{\fM}{\mathfrak{M}}
\newcommand{\fN}{\mathfrak{N}}

\newcommand{\fX}{\mathfrak{X}}
\newcommand{\fY}{\mathfrak{Y}}
\newcommand{\scA}{\mathscr{A}}
\newcommand{\se}{\mathsf{e}}
\newcommand{\sT}{\mathsf{T}}
\newcommand{\sZ}{\mathsf{Z}}
\newcommand{\VW}{\mathsf{VW}}
\newcommand{\der}{\mathrm{der}}
\newcommand{\ind}{\mathsf{ind}}
\newcommand{\loc}{\mathrm{loc}}
\newcommand{\pt}{\mathrm{pt}}
\newcommand{\rig}{\mathsf{rig}}
\newcommand{\sst}{\mathrm{sst}}
\newcommand{\st}{\mathrm{st}}
\newcommand{\unrig}{\mathsf{unrig}}
\newcommand{\vir}{\mathrm{vir}}

\DeclareMathOperator{\ad}{ad}
\DeclareMathOperator{\cone}{cone}
\DeclareMathOperator{\cocone}{cocone}

\DeclareMathOperator{\GL}{GL}
\DeclareMathOperator{\Hom}{Hom}
\DeclareMathOperator{\cHom}{\mathcal{H}{\it om}}

\DeclareMathOperator{\PGL}{PGL}
\DeclareMathOperator{\Pic}{Pic}
\DeclareMathOperator{\rank}{rank}
\DeclareMathOperator{\Res}{Res}

\DeclareMathOperator{\tr}{tr}
\DeclareMathOperator{\tot}{tot}

\tikzset{%
  vertex/.style={shape=circle,fill=black,minimum size=6pt,inner sep=0},
  framing/.style={shape=rectangle,fill=black,minimum size=6pt,inner sep=0},
  baseline={([yshift=-0.8ex]current bounding box.center)}
}

\title{Semistable refined Vafa--Witten invariants}
\author{Henry Liu}
\date{\today}

\begin{document}

\maketitle

\begin{abstract}
  For any smooth complex projective surface $S$, we construct
  semistable refined Vafa--Witten invariants of $S$ which prove the
  main conjecture of \cite{Thomas2020}. This is done by extending part
  of Joyce's universal wall-crossing formalism to equivariant
  K-theory, and to moduli stacks with symmetric obstruction theories,
  particularly moduli stacks of sheaves on Calabi--Yau threefolds. An
  important technical tool which we introduce is the symmetrized
  pullback, along smooth morphisms, of symmetric obstruction theories.
\end{abstract}

\tableofcontents

\section{Introduction}

\subsection{}

Fix a smooth complex polarized surface $(S, \cO_S(1))$ and let
$\pi\colon \tot(K_S) \to S$ be the total space of its canonical
bundle. Let $\fM$ be the moduli stack \footnote{For this paper, all
  moduli stacks are already $\bC^\times$-rigidified unless indicated
  otherwise; see Remark~\ref{rem:rigidification}. In particular,
  stabilizers of stable points are {\it trivial}, not the group
  $\bC^\times$ of scaling automorphisms.} of compactly supported
torsion sheaves $E$ on the polarized Calabi--Yau threefold
\[ (X, \cO_X(1)) \coloneqq \left(\tot(K_S), \pi^*\cO_S(1)\right). \]
Let $\sT \coloneqq \bC^\times$, with coordinate denoted $t$, act on
$X$ and therefore also on $\fM$ by scaling the fibers of $\pi$ with
weight $t^{-1}$. For example, $K_X \cong t \otimes \cO_X$.

By the spectral construction \cite[\S 2]{Tanaka2020}, a point
$[E] \in \fM$ is equivalent to a {\it Higgs pair} $(\bar E, \phi)$
where $\bar E = \pi_*E \in \cat{Coh}(S)$ and
$\phi \in \Hom_S(\bar E, \bar E \otimes K_S)$. If $\bar\fM$ is the
moduli stack of all coherent sheaves on $S$, then this identifies
\[ \fM = T^*[-1]\bar\fM \]
as a $(-1)$-shifted cotangent bundle (of appropriate derived
enhancements) where $\sT$ scales the cotangent fibers. This
perspective is useful psychologically but we will avoid any actual use
of derived algebraic geometry. \footnote{The two exceptions are
  Example~\ref{ex:symmetrized-pullback-shifted-cotangent-bundle} and
  its use in the construction of
  \S\ref{sec:quiver-stacks-obstruction-theory}, but we also give an
  alternate non-derived construction in
  \S\ref{sec:quiver-stacks-obstruction-theory-alternate}.}

\subsection{}

Unsurprisingly, there is an exact triangle
\begin{equation} \label{eq:VW-obstruction-theory}
  R\Hom_X(E, E) \to R\Hom_S(\bar E, \bar E) \xrightarrow{\circ\phi - \phi\circ} t^{-1} \otimes R\Hom_S(\bar E, \bar E \otimes K_S) \xrightarrow{[1]}
\end{equation}
relating the obstruction theories of $E$ and $(\bar E, \phi)$. However
if $H^1(\cO_S)$ or $H^2(\cO_S)$ is non-zero, then the obstruction
theories of $\det \bar E \in \Pic(S)$ and $\tr \phi \in H^0(K_S)$ are
constant pieces of the second and third terms in
\eqref{eq:VW-obstruction-theory} respectively, and make invariants
vanish; see \S\ref{sec:U-pairs-invariants}. They are removed by taking
\begin{equation} \label{eq:VW-stack}
  \fN \coloneqq \{\det \bar E = L, \; \tr \phi = 0\} \subset \fM
\end{equation}
for a given $L \in \Pic(S)$. The induced obstruction theory for $\fM$
relative to $H^0(K_S) \times \Pic(S)$, and therefore for $\fN$ by
restriction, is given by the first term in the exact triangle
\begin{equation} \label{eq:VW-reduced-obstruction-theory}
  R\Hom_X(E, E)_\perp \to R\Hom_S(\bar E, \bar E)_0 \xrightarrow{\circ\phi - \phi\circ} t^{-1} \otimes R\Hom_S(\bar E, \bar E \otimes K_S)_0 \xrightarrow{[1]}
\end{equation}
where a subscript $0$ denotes traceless part. Note that it remains
symmetric, in the sense that
$R\Hom_X(E, E)_\perp^\vee \simeq t \otimes R\Hom_X(E, E)_\perp[3]$.

For details, particularly on the important step of how to construct
this obstruction theory in families, see \cite[\S 5]{Tanaka2020}.
\footnote{The constructions of \cite{Tanaka2020} are given on stable
  loci only, but Atiyah classes satisfying all the usual properties
  can be defined for general Artin stacks \cite{Kuhn2024}. So we will
  not worry about the details of how to construct the obstruction
  theory on the whole stack.}

\subsection{}

Consider Gieseker stability on $\fN$ with respect to $\cO_X(1)$. Let
$\fN_{r,L,c_2}$ be the component consisting of sheaves $\bar E$ with
$\det \bar E = L$ and rank and Chern classes
\begin{equation} \label{eq:classes}
  \alpha \coloneqq (r, c_1(L), c_2) \in \bZ_{>0} \oplus H^2(S) \oplus H^4(S).
\end{equation}
When $\alpha$ is such that there are no strictly semistable objects,
the semistable locus $\fN_{r,L,c_2}^\sst \subset \fN_{r,L,c_2}$ is a
quasi-projective scheme and the symmetric obstruction theory on
$\fN_{r,L,c_2}$ becomes perfect upon restriction. The fixed locus
$(\fN_{r,L,c_2}^\sst)^\sT$ is proper and therefore the {\it refined
  Vafa--Witten invariant}
\begin{equation} \label{eq:stable-VW-invariant}
  \VW_\alpha(t) \coloneqq \chi\left(\fN_{r,L,c_2}^\sst, \hat\cO^\vir\right) \in \bQ(t^{\frac{1}{2}})
\end{equation}
is well-defined by $\sT$-equivariant localization; see
\S\ref{sec:joyce-song} for details. Deformation invariance implies
that $\VW_\alpha(t)$ depends only on $c_1(L)$ instead of $L$. It is
convenient to write $\fN_\alpha \coloneqq \fN_{r,L,c_2}$ for some
(unspecified) choice of $L \in \Pic(S)$ with the first Chern class
specified by $\alpha$.

\subsection{}

When $\alpha$ has strictly semistable objects, it becomes more
difficult to define enumerative invariants. Following the philosophy
for generalized Donaldson--Thomas invariants \cite{Joyce2012}, Tanaka
and Thomas \cite[\S 6]{Tanaka2017} \cite[\S 5]{Thomas2020} proposed to
rigidify $\fN_\alpha^\sst$ using an auxiliary moduli stack
\[ \fN^{Q(k)}_{\alpha,1} = \left\{(E, s) : [E] \in \fN_\alpha, \; s \in H^0(E \otimes \cO_X(k))\right\} \]
of {\it Joyce--Song pairs}, for $k \gg 0$ such that $H^{>0}(E(k)) =
0$. There are no strictly semistable Joyce--Song pairs. The semistable
locus $\fN^{Q(k),\sst}_{\alpha,1}$ is a quasi-projective scheme with
proper $\sT$-fixed locus, and it carries a symmetric perfect
obstruction theory. (These facts are discussed in
\S\ref{sec:symmetrized-pullback}--\S\ref{sec:joyce-song}; see
Remark~\ref{rem:our-vs-JS-pairs} in particular.) The {\it refined
  pairs invariant}
\[ \tilde\VW_\alpha(k, t) \coloneqq \chi\left(\fN^{Q(k),\sst}_{\alpha,1}, \hat\cO^\vir\right) \in \bQ(t^{\frac{1}{2}}) \]
is well-defined by $\sT$-equivariant localization; see
\S\ref{sec:joyce-song} for details. The conjecture, which we prove in
this paper, was as follows. Define the symmetrized quantum integer
\begin{equation} \label{eq:quantum-integer}
  [n]_t \coloneqq (-1)^{n-1} \frac{t^{\frac{n}{2}} - t^{-\frac{n}{2}}}{t^{\frac{1}{2}} - t^{-\frac{1}{2}}},
\end{equation}
and let $\tau$ be the Gieseker stability condition (normalized Hilbert
polynomial) of Definition~\ref{def:stability-conditions}. The extra
sign $(-1)^{n-1}$ in $[n]_t$ saves on signs elsewhere.

\begin{theorem}[{\cite[Conjecture 5.2]{Thomas2020}}] \label{thm:VW-invars}
  There exist $\VW_{\alpha_i}(t) \in \bQ(t^{\frac{1}{2}})$ such that:
  \begin{enumerate}
  \item if $H^1(\cO_S) = H^2(\cO_S) = 0$, then
    \begin{equation} \label{eq:semistable-VW-invariant}
      \tilde\VW_\alpha(k, t) = \sum_{\substack{n > 0\\\alpha_1 + \cdots + \alpha_n = \alpha\\\forall i: \tau(\alpha_i) = \tau(\alpha)}} \frac{1}{n!} \prod_{i=1}^n \Big[\chi(\alpha_i(k)) - \chi\Big(\sum_{j=1}^{i-1} \alpha_j, \alpha_i\Big)\Big]_t \VW_{\alpha_i}(t);
    \end{equation}
  \item if $H^2(\cO_S) \neq 0$, then
    \begin{equation} \label{eq:semistable-VW-invariant-h2-nonzero}
      \tilde\VW_\alpha(k, t) = [\chi(\alpha(k))]_t \VW_\alpha(t);
    \end{equation}
  \item if $H^1(\cO_S) \neq 0$, then
    \eqref{eq:semistable-VW-invariant-h2-nonzero} holds modulo $(1 -
    t)^{\dim H^1(\cO_S)}$. \footnote{The invariants
    $\tilde\VW_\alpha(k, t)$ have no poles at $t=1$ \cite[Proposition
      2.22]{Thomas2020}, so it makes sense to filter by the order of
    vanishing at $t=1$. If additionally $H^2(\cO_S) \neq 0$, then
    \eqref{eq:semistable-VW-invariant-h2-nonzero} holds without any
    caveats.}
  \end{enumerate}
\end{theorem}

These $\VW_\alpha(t)$ are {\it refined semistable Vafa--Witten
  invariants}. In principle, there are many sensible ways to define
semistable invariants and this theorem is only one possible choice. We
expect that it is a good choice because it will have good
wall-crossing behavior under variation of stability condition, like in
\cite[Theorem 5.18]{Joyce2012}. This will be explored in future work.

The original statement \cite[Conjecture 5.2]{Thomas2020} assumed that
$\cO_S(1)$ is generic for $\alpha$, in the sense of
\eqref{eq:generic-polarization}, which makes $\chi(\alpha_j, \alpha_i)
= 0$ and simplifies the formula.

In the $H^1(\cO_S) \neq 0$ case, Thomas also conjectures that the
caveat ``modulo $(1 - t)^{\dim H^1(\cO_S)}$'' is unnecessary, but we
are unsure whether this is true.

\subsection{}

Note that Theorem~\ref{thm:VW-invars} becomes trivial if the
$\VW_\alpha(k, t)$ were allowed to depend on $k$, because
\eqref{eq:semistable-VW-invariant} would be an upper-triangular and
invertible transformation between $\{\tilde\VW_\alpha(k, t)\}_\alpha$
and $\{\VW_\alpha(k, t)\}_\alpha$ in $\bQ(t^{\frac{1}{2}})$. The
non-trivial content is therefore that the $\VW_\alpha(k, t)$ defined
(uniquely) in this way are actually independent of $k$. This is a
wall-crossing result which we prove using ideas from Joyce's recent
universal wall-crossing formalism \cite[Theorem 5.7]{Joyce2021}.
Specifically, we:
\begin{enumerate}
\item (\S\ref{sec:quiver-framed-stacks},
  \S\ref{sec:master-space-calculation}) construct a master space
  $M_\alpha \coloneqq \fN^{\tilde Q(k_1,k_2),\sst}_{\alpha,\vec 1}$
  with a $\bC^\times$-action whose fixed loci consist of
  $\fN_{\alpha,1}^{Q(k_1),\sst}$, $\fN_{\alpha,1}^{Q(k_2),\sst}$, and
  some ``interaction'' terms;
\item (\S\ref{sec:symmetrized-pullback},
  \S\ref{sec:quiver-framed-stacks}) put a symmetric perfect
  obstruction theory on $M_\alpha$ using {\it symmetrized pullback} of
  the symmetric obstruction theory on $\fN_\alpha$;
\item (\S\ref{sec:master-space-calculation}) take K-theoretic residue
  of the equivariant localization formula on $M_\alpha$ to get a
  wall-crossing formula relating $\tilde\VW_\alpha(k_1, t)$ and
  $\tilde\VW_\alpha(k_2, t)$;
\item (\S\ref{sec:semistable-invariants}) prove a combinatorial lemma
  which, applied to the wall-crossing formula, shows that
  $\{\VW_\alpha(k, t)\}_\alpha$ are independent of $k$.
\end{enumerate}
Various forms of master spaces and such localization arguments have
existed since the beginnings of modern enumerative geometry
\cite{Thaddeus1996, Mochizuki2009, Nakajima2011, Kiem2013}. Step 1 was
done in \cite{Joyce2021} and the general principle of step 3 has also
appeared previously (mostly in cohomology), but it appears that step 2
and 4 are genuinely new. Together, steps 1--3 demonstrate how master
space arguments work in equivariant K-theory using symmetric
obstruction theories, and should be very generally applicable.
\footnote{For instance, we demonstrate in \cite{KLT} how they can be
  applied, along with wall-crossing ideas from \cite[Theorem
    5.8]{Joyce2021}, to study wall-crossing for invariants like
  $\VW_\alpha(t)$ under variation of stability condition.}

\subsection{}

The formula \eqref{eq:semistable-VW-invariant} is a {\it quantized}
version of the Joyce--Song formula \cite[Equation 5.17]{Joyce2012}
expressing generalized Donaldson--Thomas invariants in terms of pair
invariants, in the sense that it becomes the original formula in the
$t \to 1$ limit. We expect that many other formulas in
\cite{Joyce2012} can be quantized using similar techniques, to be
explored in future work.

\subsection{}

In \S\ref{sec:symmetrized-pullback}, we review obstruction theories on
Artin stacks and define and construct symmetrized pullbacks. Very
generally, if $f\colon \fX \to \fY$ is a morphism of Artin stacks and
$f$ has a relative obstruction theory, then there is a notion of
compatibility between obstruction theories (resp. symmetric
obstruction theories) on $\fX$ and $\fY$ due to Manolache
\cite{Manolache2012} (resp. Park \cite[Theorem 0.1]{Park2021}).
Symmetrized pullback is an operation
(\S\ref{sec:symmetrized-pullback-construction}) which takes a
symmetric obstruction theory on $\fY$ and produces a compatible one on
$\fX$, under some fairly restrictive assumptions on the relative
obstruction theory for $f$. These assumptions are satisfied in our
Vafa--Witten setting basically because all the stacks involved are
secretly shifted cotangent bundles.

Symmetric pullback is crucial in step 2 above for equipping the master
space $M_\alpha$ with {\it any} perfect obstruction theory, let alone
a symmetric one; that the resulting perfect obstruction theory is
indeed still symmetric is a bonus feature that dramatically simplifies
step 3.

\subsection{Acknowledgements}

A good portion of this project is either inspired by or directly based
on Dominic Joyce's work \cite{Joyce2021}, and benefitted greatly from
discussions with him. I am also grateful for discussions and ongoing
collaborations with Arkadij Bojko, Nikolas Kuhn and Felix Thimm,
particularly regarding symmetrized pullback and obstruction theories.

This work was supported by the Simons Collaboration on Special
Holonomy in Geometry, Analysis and Physics.

\section{Symmetrized pullback and K-theory}
\label{sec:symmetrized-pullback}

\subsection{}

\begin{definition}
  Let $\fX$ be an Artin stack over a base Artin stack $\fB$, with
  action of a torus $\sT$, and let $D_{\cat{QCoh},\sT}^-(\fX)$ be its
  derived category of bounded-above $\sT$-equivariant complexes with
  quasi-coherent cohomology. The cotangent complex $\bL_{\fX/\fB} \in
  D_{\cat{QCoh},\sT}^-(\fX)$ has a canonical $\sT$-equivariant
  structure. Let $\bT_{\fX/\fB} \coloneqq \bL_{\fX/\fB}^\vee$ denote
  the tangent complex.

  An ($\sT$-equivariant) {\it obstruction theory} of $\fX$ over $\fB$
  is an object $\bE \in D_{\cat{QCoh},\sT}^-(\fX)$, together with a
  morphism $\phi\colon \bE\to \bL_{\fX/\fB}$ in
  $D_{\cat{QCoh},\sT}^-(\fX)$, such that $h^{\geq 0}(\phi)$ are
  isomorphisms and $h^{-1}(\phi)$ is surjective. An obstruction theory
  is:
  \begin{itemize}
  \item {\it perfect} if $\bE$ is perfect of amplitude contained in
    $[-1,1]$;
  \item {\it $n$-symmetric of weight $t$}, for $n \in \bZ$ and a
    $\sT$-weight $t$, if there is an isomorphism $\Theta\colon
    \bE\xrightarrow{\sim} t \otimes \bE^\vee[n-2]$ such that
    $\Theta^\vee[n-2]=\Theta$.
  \end{itemize}
  From here on, unless otherwise indicated: all objects and morphisms
  are $\sT$-equivariant, and we take $n=3$ and just say ``symmetric''.
\end{definition}

\subsection{}

Suppose $\fX$ is equipped with a symmetric obstruction theory $\bE$
which is perfect as a complex. Symmetry implies
\[ h^{-i}(\bE) = t \otimes h^{i-1}(\bE)^\vee = 0 \quad \forall i>2. \]
If in addition $h^1(\bL_\fX) = 0$, then $h^1(\bE) = h^1(\bL_\fX) = 0$
and $\bE$ would be a perfect obstruction theory. This will not hold in
general because objects in $\fX$ can have non-trivial automorphisms.

However, if $\fX$ has a stability condition with no strictly
semistable objects, then the stable locus $\fX^\st \subset \fX$ is
open and $\sT$-invariant. Stable objects have no automorphisms, so
$h^1(\bL_\fX)\big|_{\fX^\st} = h^1(\bL_{\fX^\st}) = 0$, and hence
$\bE\big|_{\fX^\st}$ is a symmetric perfect obstruction theory on
$\fX^\st$.

\subsection{}

\begin{definition}
  Let $f\colon \fX \to \fY$ be a morphism of Artin stacks with a
  relative obstruction theory $\phi_f\colon \bE_f \to \bL_f$. Two
  symmetric obstruction theories $\phi_\fX\colon \bE_\fX \to \bL_\fX$
  and $\phi_\fY\colon \bE_\fY \to \bL_\fY$, for the same weight $t$,
  are {\it compatible under $f$} if there are morphisms of exact
  triangles
  \begin{equation} \label{eq:symmetric-compatibility-diagram}
    \begin{tikzcd}
      \bE_f[-1] \ar{r} \ar[equals]{d} & t \otimes \bF^\vee[1] \ar{r}{\zeta^\vee[1]} \ar{d}{\eta^\vee[1]} & \bE_\fX \ar{d}{\zeta} \ar{r}{+1} & {} \\
      \bE_f[-1] \ar{r}{\delta} \ar{d}{\phi_f[-1]} & f^*\bE_\fY \ar{r}{\eta} \ar{d}{f^*\phi_\fY} & \bF \ar{d} \ar{r}{+1} & {} \\
      \bL_f[-1] \ar{r} & f^*\bL_\fY \ar{r} & \bL_\fX \ar{r}{+1} & {}
    \end{tikzcd}
  \end{equation}
  for some $\bF \in D^-_{\cat{QCoh},\sT}(\fX)$ such that the
  right-most column is $\phi_\fX$. Note that the symmetry of $\bE_\fY$
  here actually implies the symmetry of $\bE_\fX$: the upper-right
  square is invariant under $t \otimes (-)^\vee[1]$, except for
  possibly the upper right corner $\bE_{\fX}$, and so uniqueness of
  cones yields an isomorphism $\bE_\fX \cong t \otimes
  \bE_\fX^\vee[1]$.

  From a different perspective, if only $\phi_f$ and a symmetric
  $\phi_\fY$ are given, a {\it symmetrized pullback} of $\phi_\fY$
  along $f$ is the data of an object $\bE_\fX$ and maps $\delta,
  \zeta, \eta$ such that \eqref{eq:symmetric-compatibility-diagram} is
  a morphism of exact triangles. Then $\bE_\fX$ inherits the symmetry
  of $\bE_\fY$, and the five lemma applied to the obvious cohomology
  long exact sequences shows that both $\bF \to \bL_\fX$ and $\bE_\fX
  \to \bF \to \bL_\fX$ are obstruction theories. If $f$ is smooth, our
  convention is to take $\bE_f \coloneqq \bL_f = \Omega_f$ as the
  relative obstruction theory.
\end{definition}

\subsection{}
\label{sec:symmetrized-pullback-construction}

Generally, we try to construct symmetrized pullback in the following
way. Suppose the solid bottom left square in the following
commutative diagram is given:
\begin{equation} \label{eq:symmetrized-pullback-construction}
  \begin{tikzcd}[column sep=4em]
    A \ar[dotted]{r}{\gamma} \ar[dashed,equals]{d} & \cocone(\delta^\vee[1]) \ar[dotted]{r} \ar[dashed]{d} & \cocone(\beta) \ar[dashed]{d} \ar[dotted]{r}{+1} & {} \\
    A \ar{r}{\delta} \ar{d} & f^*\bE_\fY \ar[dashed]{r} \ar{d}{\delta^\vee[1]} & \cone(\delta) \ar[dashed]{d}{\beta} \ar[dashed]{r}{+1} & {}  \\
    0 \ar{r} & t \otimes A^\vee[1] \ar[dashed,equals]{r} & t \otimes A^\vee[1] \ar[dashed]{r}{+1} & {}
  \end{tikzcd}
\end{equation}
For symmetrized pullback we want $A = \bE_f[-1]$, though the following
construction works for any $A$ with amplitude in $[0, 2]$. The given
data induces all dashed arrows, making all but the topmost row into
exact triangles. The octahedral axiom then implies the topmost row can
also be completed into an exact triangle, given by the dotted arrows.

Suppose in addition that $\beta$ is unique (up to isomorphism of
$\cone(\delta)$). Then the morphism $\gamma\colon A \to
\cocone(\delta^\vee[1])$, which a priori came from the octahedral
axiom, must actually be $\beta^\vee[1]$. It follows that the entire
diagram is symmetric across the diagonal after applying $t \otimes
(-)^\vee[1]$. Then set $\bF \coloneqq \cone(\delta)$ and $\bE_\fX
\coloneqq \cocone(\beta)$ to conclude. The desired symmetrized
pullback is $\bE_\fX$ with the natural maps $\bE_\fX \to \bF \to
\bL_\fX$.

To conclude, we constructed a symmetrized pullback assuming that
$\delta^\vee[1] \circ \delta = 0$ and that $\beta$ is unique. In
general, it is unclear how to fulfill these assumptions. We will give
two special settings where they do hold.

\subsection{}

\begin{example}
  Let $\delta\colon A \to f^*\bE_\fY$ be given. Suppose $\Hom(A,
  A^\vee[1]) = 0 = \Hom(A, A^\vee)$. The first equality implies
  $\delta^\vee[1] \circ \delta = 0$. The second equality implies that
  $\beta$ in \eqref{eq:symmetrized-pullback-construction} is unique,
  since any two ways of filling in $\beta$ are homotopic but the
  homotopy lies in $\Hom(A, A^\vee) = 0$. Then the construction in
  \S\ref{sec:symmetrized-pullback-construction} produces the
  symmetrized pullback.

  For instance, suppose that $\fX$ is {\it affine}, $f$ is a (stacky)
  vector bundle, and $\bE_f \coloneqq \bL_f$ is the relative
  obstruction theory. Then $A = \bL_f[-1]$ is locally free of
  amplitude $[1, 2]$, and the desired vanishings occur for degree
  reasons. In fact, in this setting, even the lift
  $\delta\colon A \to f^*\bE_\fY$ of the natural map
  $A \to f^*\bL_\fY$ exists uniquely: the obstructions to existence
  and uniqueness lie in $\Hom(A, \cone(f^*\phi_\fY))$ and
  $\Hom(A, \cone(f^*\phi_\fY)[-1])$ respectively, which also vanish
  for degree reasons. In \cite{KLT}, this observation is used to
  construct symmetrized pullbacks using {\it almost-perfect
    obstruction theories} \cite{Kiem2020}, an \'etale-local version of
  perfect obstruction theory.
\end{example}

\subsection{}

\begin{example} \label{ex:symmetrized-pullback-shifted-cotangent-bundle}
  For this example, which the reader may feel free to skip, we assume
  some knowledge of derived algebraic geometry; see \cite[Ch.
    2.2]{Toen2008} \cite[\S 4.2, 4.3]{Toen2009} for background.
  Suppose that $f\colon \fX \to \fY$ arises as the classical
  truncation of the top row in the following homotopy-Cartesian square
  of derived Artin stacks:
  \begin{equation} \label{eq:symmetrized-pullback-shifted-cotangent-bundle-diagram}
    \begin{tikzcd}
      \fX^\der \ar{r}{f} \ar{d}{\pi} & T^*[-1]\fM^\der \ar{d}{\pi} \\
      \bar\fX^\der \ar{r}{\bar f} & \fM^\der
    \end{tikzcd}
  \end{equation}
  where $\bar f\colon \bar\fX^\der \to \fM^\der$ is a given smooth
  morphism of derived stacks, and $T^*[-1]\fM^\der \coloneqq \tot(t
  \otimes \bL_{\fM^\der}[-1])$ is the $(-1)$-shifted cotangent bundle
  \cite[Ex. 4.3.3]{Toen2009}. Recall that if $\fM^\der$ is a derived
  Artin stack with classical truncation $\fM$ and its natural
  inclusion $i\colon \fM \to \fM^\der$, then:
  \begin{itemize}
  \item the cotangent complex $\bL_{\fM^\der}$ exists in a
    pre-triangulated dg category $L_{\cat{QCoh}}(\fM^\der)$,
    \footnote{If $\fM^\der = i(\fM)$ is actually a classical stack,
      then this is just the usual cotangent complex
      $\bL_\fM \in D\cat{QCoh}(\fM) = L_{\cat{QCoh}}(\fM^\der)$.}
    satisfying the usual properties of cotangent complexes;
  \item $h^k(\bL_i) = 0$ for $k \ge -1$ \cite[Proposition
    1.2]{Schuerg2015} and therefore
    $i^*\bL_{\fM^\der} \to \bL_{\fM}$ is an obstruction theory;
  \item if $f\colon \fM^\der \to \fN^\der$ is a morphism of locally
    finitely presented derived Artin stacks, \footnote{Note that a
      locally finitely presented morphism $f\colon \fM \to \fN$ of
      classical stacks is not necessarily locally finitely presented
      as a morphism $i(f)\colon i(\fM) \to i(\fN)$ of derived stacks.}
    then $\bL_f$ is perfect.
  \end{itemize}
  We assume that all objects and morphisms are
  $\bC^\times$-equivariant for the $\bC^\times$-action which scales
  the (shifted) cotangent direction.

  By definition, $\bL_\pi = t \otimes \pi^*\bL_{\fM^\der}^\vee[1]$,
  and by base change, $\bL_f \cong \pi^* \bL_{\bar f}$. So one can
  form the dashed diagonal maps in
  \begin{equation} \label{eq:symmetrized-section-cosection}
    \begin{tikzcd}
      \bL_f[-1] \ar{d}{\bar\delta} \ar[dashed]{dr}{\delta} \\
      f^*\pi^*\bL_{\fM^\der} \ar{r} & f^*\bL_{T^*[-1]\fM^\der} \ar{r}  \ar[dashed]{dr}[swap]{\delta^\vee[1]} & t \otimes f^*\pi^*\bL_{\fM^\der}^\vee[1] \ar{r}{+1} \ar{d}{\bar\delta^\vee[1]} & {} \\
      {} & {} & t \otimes \bL_f^\vee[2]
    \end{tikzcd}
  \end{equation}
  where the middle row is the exact triangle of cotangent complexes
  for $\pi$, and $\bar\delta$ is the connecting map in the exact
  triangle of cotangent complexes for $\bar f$. Evidently
  $\delta^\vee[1] \circ \delta = 0$, so this $\delta$ can be used in
  the construction of \S\ref{sec:symmetrized-pullback-construction}.
  Cones are functorial in pre-triangulated dg categories and so
  $\beta$ is unique. Classical truncation of the entire diagram
  \eqref{eq:symmetrized-pullback-construction} then produces the
  desired symmetrized pullback.
\end{example}

\subsection{}

\begin{remark} \label{rem:symmetry-is-unnecessary}
  In fact, the uniqueness of $\beta$, and therefore the symmetry of
  $\cocone(\beta)$, is often irrelevant in applications where it is
  enough to construct an obstruction theory $\bE_\fX \to \bL_\fX$ such
  that:
  \begin{itemize}
  \item the restriction to a stable locus $\fX^\st \subset \fX$ is a
    perfect obstruction theory;
  \item the {\it K-theory class} of $\bE_\fX$ is symmetric, i.e.
    $\bE_\fX = -t \bE_\fX^\vee \in K_\sT(\fX)$.
  \end{itemize}
  The construction \eqref{eq:symmetrized-pullback-construction}
  satisfies both properties without assuming uniqueness of $\beta$.
  Namely, the cohomology long exact sequence for the top row of
  \eqref{eq:symmetrized-pullback-construction} gives
  \[ 0 = h^{-2}(A) \to t \otimes h^1(\bF)^\vee \xrightarrow{\sim} h^{-2}(\bE_\fX) \to h^{-1}(A) = 0, \]
  but $h^1(\bF) = h^1(\bL_\fX)$ since $\bF$ is an obstruction theory
  for $\fX$, so indeed $h^{-2}(\bE_\fX)$ vanishes when restricted to
  $\fX^\st$. It is also clear that
  \begin{equation} \label{eq:symmetrized-pullback-k-class}
    \bE_\fX = f^*\bE_\fY - A + t A^\vee \in K_\sT(\fX)
  \end{equation}
  is symmetric. Abusing terminology, hopefully without too much
  confusion, we continue to refer to the constructed
  $\bE_\fX \to \bL_\fX$ as a symmetrized pullback regardless of
  whether $\beta$ is unique.
\end{remark}

\subsection{}

\begin{lemma} \label{lem:symmetrized-pullback-cosection}
  Let $\bE_\fX$ be a symmetrized pullback of $\bE_\fY$ along a smooth
  morphism $f\colon \fX \to \fY$. Then a surjection
  $h^{-1}(\bE_\fY) \twoheadrightarrow \cO_\fY$ induces a surjection
  $h^{-1}(\bE_\fX) \twoheadrightarrow \cO_\fX$.
\end{lemma}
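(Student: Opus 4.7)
The plan is to push the given surjection $h^{-1}(\bE_\fY) \twoheadrightarrow \cO_\fY$ through the top row and (dualized) middle row of the symmetric compatibility diagram \eqref{eq:symmetric-compatibility-diagram}, using smoothness of $f$ to kill all relative cotangent contributions and using the symmetry of $\bE_\fY$ to convert a $h^0$ statement back into an $h^{-1}$ statement at the end.

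First, I compute $h^{-1}(\bE_\fX)$ from the top row exact triangle $\bE_f[-1] \to t \otimes \bF^\vee[1] \to \bE_\fX \xrightarrow{+1}$. Since $f$ is smooth, $\bE_f = \bL_f$ is concentrated in non-negative degrees (amplitude $[0,1]$), so $A \coloneqq \bL_f[-1]$ satisfies $h^{-1}(A) = 0$ and $h^0(A) = h^{-1}(\bL_f) = 0$. The long exact sequence then produces an isomorphism
\[ h^{-1}(\bE_\fX) \;\cong\; h^{-1}(t \otimes \bF^\vee[1]) \;=\; t \otimes h^0(\bF^\vee). \]

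Next, I relate $h^0(\bF^\vee)$ to $f^* h^{-1}(\bE_\fY)$. Dualizing the middle row exact triangle $A \to f^*\bE_\fY \to \bF$ yields $\bF^\vee \to (f^*\bE_\fY)^\vee \to A^\vee$. Taking $h^0$, the vanishing $h^0(A^\vee) = h^1(\bL_f^\vee) = 0$ (valid because $\bL_f^\vee$ has amplitude in $[-1,0]$ for smooth $f$) produces a surjection $h^0(\bF^\vee) \twoheadrightarrow h^0((f^*\bE_\fY)^\vee)$. Flatness of $f$ identifies the target with $f^* h^0(\bE_\fY^\vee)$, and the symmetry $\bE_\fY \cong t \otimes \bE_\fY^\vee[1]$ identifies $h^0(\bE_\fY^\vee) \cong t^{-1} \otimes h^{-1}(\bE_\fY)$. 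Tensoring the whole chain with $t$ and composing with the pullback of the given cosection yields
\[ h^{-1}(\bE_\fX) \;\cong\; t \otimes h^0(\bF^\vee) \;\twoheadrightarrow\; f^* h^{-1}(\bE_\fY) \;\twoheadrightarrow\; \cO_\fX, \]
which is the desired surjection.

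The only real subtlety will be keeping the $\sT$-weight bookkeeping straight: the factor of $t$ coming from $t \otimes \bF^\vee[1]$ in the top row must cancel exactly against the $t^{-1}$ coming from the symmetry of $\bE_\fY$. Beyond this, everything is a direct diagram chase on long exact sequences. In particular, I expect not to need the uniqueness of $\beta$ from \S\ref{sec:symmetrized-pullback-construction} nor the full symmetry of $\bE_\fX$; only exactness of the rows of \eqref{eq:symmetric-compatibility-diagram} and perfectness of $\bL_f$ for smooth $f$ are used, so the argument should apply in the greater generality of Remark~\ref{rem:symmetry-is-unnecessary}.
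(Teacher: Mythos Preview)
Your proof is correct. Both you and the paper do a cohomology long exact sequence chase on the compatibility diagram, using that $A = \bL_f[-1]$ has amplitude in $[1,2]$ for smooth $f$; the only difference is which rows you read off. The paper works with $h^{-1}$ throughout: the middle row gives $h^{-1}(f^*\bE_\fY)\cong h^{-1}(\bF)$, and the rightmost column $\bE_\fX\to\bF\to t\otimes A^\vee[1]$ of \eqref{eq:symmetrized-pullback-construction} gives $h^{-1}(\bE_\fX)\twoheadrightarrow h^{-1}(\bF)$, so one composes directly with the pulled-back cosection. You instead pass through the top row to $t\otimes h^0(\bF^\vee)$, dualize the middle row, and then invoke the symmetry isomorphism $\bE_\fY\cong t\otimes\bE_\fY^\vee[1]$ to convert back to $f^*h^{-1}(\bE_\fY)$. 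This is a slightly longer detour and uses the symmetry of $\bE_\fY$ explicitly (which the paper's route does not), but your $t$-bookkeeping is fine and the argument goes through. As you correctly observe, neither argument needs uniqueness of $\beta$ or symmetry of $\bE_\fX$, so both apply in the generality of Remark~\ref{rem:symmetry-is-unnecessary}.
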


Such surjections are nowhere-vanishing cosections of the obstruction
sheaves, and will be used later to show that certain virtual cycles
vanish via Kiem--Li cosection localization \cite{Kiem2013a}.

\begin{proof}
  Smoothness of $f$ means $A \coloneqq \bE_f[-1]$ has amplitude in
  $[1,2]$ only, so the cohomology long exact sequence for the middle
  row of \eqref{eq:symmetric-compatibility-diagram} or
  \eqref{eq:symmetrized-pullback-construction} gives
  \[ 0 = h^{-1}(A) \to h^{-1}(f^*\bE_\fY) \xrightarrow{\sim} h^{-1}(\bF) \to h^0(A) = 0. \]
  Similarly, the right-most column of
  \eqref{eq:symmetric-compatibility-diagram} or
  \eqref{eq:symmetrized-pullback-construction} gives
  \[ h^{-1}(\bE_\fX) \twoheadrightarrow h^{-1}(\bF) \to h^{-1}(t \otimes A^\vee[1]) \cong t \otimes h^0(A)^\vee = 0. \]
  The desired surjection is then
  $h^{-1}(\bE_\fX) \twoheadrightarrow h^{-1}(\bF) \cong h^{-1}(f^*\bE_\fY) \twoheadrightarrow f^*\cO_\fY = \cO_\fX$.
\end{proof}

\section{Quiver-framed stacks}
\label{sec:quiver-framed-stacks}

\subsection{}

\begin{definition} \label{def:quiver-framed-stack}
  Let $\fN = \bigsqcup_\alpha \fN_\alpha$ be the Vafa--Witten moduli
  stack \eqref{eq:VW-stack} from the introduction. For $k \in \bZ_{>
    0}$, let ${}_k\fN \subset \fN$ be the open locus of $k$-regular
  sheaves, i.e. those $E$ such that $H^i(E(k-i)) = 0$ for all $i > 0$.
  Here are some standard facts \cite[\S 1.7]{Huybrechts2010}. First,
  $k$-regular implies $(k+1)$-regular. So, the functor
  \[ F_k\colon E \mapsto H^0(E \otimes \cO_X(k)) \]
  is exact on $k$-regular sheaves --- in fact, it equals $\chi(E
  \otimes \cO_X(k))$ --- and induces an inclusion $\Hom(E, E) \to
  \Hom(F_k(E), F_k(E))$. Consequently, the quantity $\lambda_k(\alpha)
  \coloneqq \dim F_k(\alpha)$ depends only on the class $\alpha$ of
  $E$. Finally, if $S \subset \fN$ is any finite-type substack, e.g.
  the locus of semistable objects of a given class $\alpha$, there
  exists $k \gg 0$ such that $S \subset {}_k\fN$.

  Let $Q$ be a quiver with no cycles, with edges $Q_1$ and vertices
  $Q_0 = Q_0^o \sqcup Q_0^f$ split into ordinary vertices
  $\boldsymbol{\bullet} \in Q_0^o$ and {\it framing} vertices
  $\blacksquare \in Q_0^f$ such that ordinary vertices have no
  outgoing arrows. For
  $\kappa = (\kappa(v))_{v \in Q_0^o} \in \bZ^{|Q_0^o|}$ and a
  dimension vector $\vec d = (d_v)_{v \in Q_0^f}$, let
  $\fN^{Q(\kappa)}_{\alpha,\vec d}$ be the moduli stack of triples
  $(E, \vec V, \vec \rho)$ where:
  \begin{itemize}
  \item
    $[E] \in {}_\kappa\fN_\alpha \coloneqq \bigcap_{v \in Q_0^o} {}_{\kappa(v)}\fN_\alpha$;
  \item $\vec V = (V_v)_{v \in Q_0^f}$ where $V_v$ is a
    $d_v$-dimensional vector space; set
    $V_v \coloneqq F_{\kappa(v)}(E)$ for $v \in Q_0^o$;
  \item $\vec \rho = (\rho_e)_{e \in Q_1}$ are linear maps between the
    $V_v$.
  \end{itemize}
  A morphism between two triples $(E, \vec V, \vec \rho)$ and
  $(E', \vec V', \vec \rho')$ consists of a morphism $E \to E'$,
  inducing morphisms $V_v \to V'_v$ for all $v \in Q_0^o$, along with
  morphisms $V_v \to V_v'$ for $v \in Q_0^f$ intertwining $\vec\rho$
  and $\vec\rho'$.
\end{definition}

\subsection{}

\begin{remark} \label{rem:rigidification}
  Every object in a $\bC$-linear category has an action by the group
  $\bC^\times$ of scaling automorphisms. Our convention is that a
  moduli stack $\fX = \bigsqcup_\beta \fX_\beta$ of such objects has
  already been {\it $\bC^\times$-rigidified} \cite[\S
  5.1]{Abramovich2003}, meaning that this $\bC^\times$ has been
  quotiented away from all stabilizer groups. Let
  $\fX^{\unrig} = \bigsqcup_\beta \fX_\beta^\unrig$ be the
  unrigidified moduli stack, with $\fX_0 = \{0\}$ containing only the
  zero object, so that
  \[ \rig_\beta\colon \fX_\beta^{\unrig} \to \fX_\beta \]
  is a principal $[\pt/\bC^\times]$-bundle for all $\beta \neq 0$.
  When universal families exist on $\fX^{\unrig}$, they have
  $\bC^\times$-weight one and so may not descend to $\fX$; see
  \cite[\S 4.6]{Huybrechts2010}.

  On $\fN^{Q(\kappa),\unrig}_{\alpha,\vec d}$, for each vertex $v$,
  let $\cV_v$ be the universal bundle of $V_v$. Indeed, by
  $k$-regularity, $\cV_v$ for $v \in Q_0^f$ is a vector bundle of rank
  $\lambda_{\kappa(v)}(\alpha)$. Then
  \begin{equation} \label{eq:quiver-stack-projection}
    \Pi_\alpha\colon \fN^{Q(\kappa),\unrig}_{\alpha,\vec d} = \bigg[\bigoplus_{[v \to v'] \in Q_1} \cV_v^\vee \otimes \cV_{v'} / \prod_{v \in Q_0^f} \GL(d_v)\bigg] \to {}_\kappa\fN_\alpha^{\unrig}
  \end{equation}
  is a stacky vector bundle. In particular, $\Pi_\alpha$ is smooth.
  Clearly $\Pi_\alpha$ is equivariant for the diagonal action of
  $\bC^\times$ on the $\GL(d_v)$ and the stabilizer groups of
  ${}_\kappa\fN_\alpha^{\unrig}$, so it descends to a projection
  $\fN^{Q(\kappa)}_{\alpha,\vec d} \to {}_\kappa\fN_\alpha$ which we
  continue to denote $\Pi_\alpha$. It has the same fibers as
  \eqref{eq:quiver-stack-projection}: while the individual $\cV_v$ do
  not descend, $\bC^\times$-weight-zero combinations like $\cV_v^\vee
  \otimes \cV_{v'}$ do descend. \footnote{In fact, the image of
  $\rig_\beta^*\colon \cat{QCoh}(\fX_\beta) \to
  \cat{QCoh}(\fX_\beta^{\unrig})$ consists {\it exactly} of those
  quasi-coherent modules with $\bC^\times$-weight zero \cite[Prop.
    4.10]{Bergh2021}.}

  All enumerative invariants live on open loci of $\fN_\alpha$, but
  the operations we will do on obstruction theories in
  \S\ref{sec:quiver-stacks-obstruction-theory-alternate} take place
  naturally on $\fN_\alpha^{\unrig}$.
\end{remark}

\subsection{}
\label{sec:quiver-stacks-obstruction-theory}

The symmetric obstruction theory on ${}_\kappa \fN_\alpha$ admits a
symmetrized pullback along $\Pi_\alpha$ following
Example~\ref{ex:symmetrized-pullback-shifted-cotangent-bundle}. First,
replace all moduli stacks with their derived enhancements, and let
\[ \bar\fN_\alpha^{\der} \coloneqq \{\det \bar E = L\} \subset \bar\fM_\alpha^{\der} \]
be the derived moduli stack of coherent sheaves $\bar E$ on $S$ with
fixed determinant $L$. The construction of
Definition~\ref{def:quiver-framed-stack} applies equally well to
$\bar\fN_\alpha^{\der}$. Since
$H^0(E \otimes \cO_X(k)) = H^0(\bar E \otimes \cO_S(k))$, there is a
homotopy-Cartesian square
\[ \begin{tikzcd}
    \fN_{\alpha,\vec d}^{Q(\kappa),\der} \ar{r}{\Pi_\alpha} \ar{d}{\pi} & {}_\kappa \fN_\alpha^{\der} \ar{d} \\
    \bar\fN_{\alpha,\vec d}^{Q(\kappa),\der} \ar{r}{\bar\Pi_\alpha} & {}_\kappa \bar \fN_\alpha^{\der}
  \end{tikzcd} \]
of the form
\eqref{eq:symmetrized-pullback-shifted-cotangent-bundle-diagram}. Note
that ${}_\kappa\fN_\alpha$ is an open locus in $\fN_\alpha^{\der} =
T^*[-1]\bar\fN_\alpha^{\der}$ and therefore has the same cotangent
complex (after restriction). Since the symmetric obstruction theory
\eqref{eq:VW-reduced-obstruction-theory} on the classical stack $\fN$
is the classical truncation of $\bL_{\fN^{\der}}$ \cite[\S
  1.7]{Tanaka2020} \cite[Proposition 5.2]{Schuerg2015}, the
construction of
Example~\ref{ex:symmetrized-pullback-shifted-cotangent-bundle} indeed
provides the desired symmetrized pullback.

\subsection{}
\label{sec:quiver-stacks-obstruction-theory-alternate}

Alternatively, the obstruction theory on $\fN^{Q(\kappa)}_{\alpha,\vec
  d}$ may be constructed without resorting to derived algebraic
geometry. The main question is how to explicitly construct a lift
\[ \begin{tikzcd}
    {} & \pi^*\bar\Pi_\alpha^* \bE_{{}_\kappa \bar\fN_\alpha} \ar{d}{\pi^*\Pi_\alpha^*\phi_\alpha} \\
    \bL_{\Pi_\alpha}[-1] \ar{r} \ar[dashed]{ur}{\bar\delta} & \pi^*\bar\Pi_\alpha^* \bL_{{}_\kappa \bar\fN_\alpha}
  \end{tikzcd} \]
of the connecting map in the exact triangle of relative cotangent
complexes. Here $\phi_\alpha$ is the natural obstruction theory
\cite[\S 5.6]{Tanaka2020} given by the traceless part of the Atiyah
class of the universal sheaf $\bar\cE$, or equivalently $\bar\cE(k)$,
on ${}_\kappa \bar\fN_\alpha^{\unrig}$. This $\bar\delta$ may then be
used to define the dashed diagonal maps in
\[ \begin{tikzcd}
  \bL_{\Pi_\alpha}[-1] \ar{d}{\bar\delta} \ar[dashed]{dr}{\delta} \\
  \pi^*\bar\Pi_\alpha^*\bE_{{}_\kappa\bar\fN_\alpha} \ar{r} & \Pi_\alpha^*\bE_{{}_\kappa\fN_\alpha} \ar{r} \ar[dashed]{dr}[swap]{\delta^\vee[1]} & t \otimes \pi^*\bar\Pi_\alpha^*\bE_{{}_\kappa\bar\fN_\alpha}^\vee[1] \ar{r}{+1} \ar{d}{\bar\delta^\vee[1]} & {} \\
  {} & {} & t \otimes \bL_{\Pi_\alpha}^\vee[2],
\end{tikzcd} \]
cf. \eqref{eq:symmetrized-section-cosection}, where the middle row is
given by the exact triangle \eqref{eq:VW-reduced-obstruction-theory}
defining the obstruction theory on $\fN$. Thus we obtain the desired
map $\delta\colon \bL_{\Pi_\alpha}[-1] \to \Pi_\alpha^* \bE_{{}_\kappa
  \fN_\alpha}$ such that $\delta^\vee[1] \circ \delta = 0$.

We construct $\bar\delta$. Note that $\bL_{\Pi_\alpha} =
\pi^*\bL_{\bar\Pi_\alpha}$. The universal bundles $(\cV_v)_{v \in
  Q_0^o}$ on $\fN^{Q(\kappa),\unrig}_{\alpha,\vec d}$ are pulled back
from similar ones on ${}_\kappa \bar\fN_\alpha^{\unrig}$, which induce
a classifying morphism
\[ c_{\cV}\colon {}_\kappa\bar\fN_\alpha^{\unrig} \to [\pt/\GL(\lambda_\kappa(\alpha))], \qquad \GL(\lambda_\kappa(\alpha)) \coloneqq \prod_{v \in Q_0^o} \GL(\lambda_{\kappa(v)}(\alpha)). \]
Since the stacky vector bundle \eqref{eq:quiver-stack-projection} can
be equally well constructed using the universal bundle on
$[\pt/\GL(\lambda_{\kappa(v)}(\alpha))]$ in place of $\cV_v$ for $v
\in Q_0^o$, there is the following natural Cartesian square and
induced commutative square of cotangent complexes
\[ \begin{tikzcd}[row sep=small, column sep=small]
  \bar\fN^{Q(\kappa),\unrig}_{\alpha,d} \ar{r}{\bar\Pi_\alpha} \ar{dd} & {}_\kappa \bar\fN_\alpha^{\unrig} \ar{dd}{c_{\cV}} & & \bL_{\bar\Pi}[-1] \ar{r} \ar{dd}{\rotatebox{90}{$\sim$}} & \bar\Pi^*\bL_{[\pt/\GL(\lambda_\kappa(\alpha))]} \ar{dd}{c_{\cV}^*} \\
  & & \leadsto \\
  {\big[\bigoplus\limits_{v \to v'} \cV_v^\vee \otimes \cV_{v'} / \!\prod\limits_{v \in Q_0^f} \GL(d_v)\big]} \ar{r}{\bar\Pi} & {}[\pt/\GL(\lambda_\kappa(\alpha))] & & \bL_{\bar\Pi_\alpha}[-1] \ar{r} & \bar\Pi_\alpha^*\bL_{{}_\kappa\bar\fN_\alpha^{\unrig}},
\end{tikzcd} \]
After rigidification, this induces the dashed diagonal arrow in the
commutative diagram
\[ \begin{tikzcd}
    & \bar\Pi_\alpha^* c_\cV^* \bL_{[\pt/\PGL(\lambda_\kappa(\alpha))]} \ar{d} \ar[dotted]{r} & \bar\Pi_\alpha^* \bE_{{}_\kappa\bar\fN_\alpha} \ar{dl}{\bar\Pi_\alpha^*\phi_\alpha} \\
    \bL_{\bar\Pi_\alpha}[-1] \ar[dashed]{ur} \ar{r} & \bar\Pi_\alpha^* \bL_{{}_\kappa\bar\fN_\alpha}.
  \end{tikzcd} \]
The middle vertical arrow is the traceless part of the cotangent
complex map for $c_\cV$, which can be identified with the traceless
part of the sum of Atiyah classes of $\cV_v$ for $v \in Q_0^o$ (cf.
\cite[Remark A.1]{Schuerg2015}). By construction there are maps $\cV_v
\to \bar \cE(k)$, so functoriality of the (traceless part of the)
Atiyah class gives the dotted horizontal arrow. Thus we get the
desired map $\bar\delta$ by pulling back the composition
$\bL_{\bar\Pi_\alpha}[-1] \to
\bar\Pi_\alpha^*\bE_{{}_\kappa\bar\fN_\alpha}$ along $\pi$.

It is unclear from the classical construction whether the resulting
$\beta$ in \eqref{eq:symmetrized-pullback-construction} is unique, but
for our purposes this is irrelevant by
Remark~\ref{rem:symmetry-is-unnecessary}.

\subsection{}

\begin{definition} \label{def:stability-conditions}
  For a class $\alpha = (r, c_1(L), c_2)$, let
  \[ P_\alpha(n) \coloneqq \chi(\alpha(n)) = r(\alpha) n^{\dim \alpha} + O(n^{\dim \alpha - 1}) \]
  be the Hilbert polynomial of the class $\alpha$. Then
  \[ \tau(\alpha) \coloneqq P_\alpha/r(\alpha) \]
  is a monic polynomial. Following \cite[Definition 7.7]{Joyce2021},
  put a total order $\le$ on monic polynomials: $f \le g$ if either
  $\deg f > \deg g$, or $\deg f = \deg g$ and $f(n) \le g(n)$ for all
  $n \gg 0$. Then $\tau$ is a stability condition on $\fN$ in Joyce's
  sense \cite[Definition 3.1]{Joyce2021}, \footnote{While Joyce works
  with the more general notion of {\it weak} stability condition,
  Gieseker stability is genuinely a stability condition. This is
  important in the verification that
  \eqref{eq:quiver-stack-stability-condition} is a stability
  condition.} and we call it {\it Gieseker stability} for obvious
  reasons. We say the polarization $\cO_S(1)$ is {\it generic} for
  $\alpha$ if
  \begin{equation} \label{eq:generic-polarization}
    \tau(\beta) = \tau(\alpha) \implies \beta = \text{constant} \cdot \alpha.
  \end{equation}

  The quantity
  $\tilde r(\alpha) \coloneqq (\dim \alpha)! \cdot r(\alpha)$ is a
  positive integer when $\alpha \neq 0$, and if
  $\tau(\alpha) = \tau(\beta)$ then
  $\tilde r(\alpha + \beta) = \tilde r(\alpha) + \tilde r(\beta)$.
  Following \cite[(5.13)]{Joyce2021}, given
  $\vec\mu \in \bR^{|Q_0^o|}$ define
  \begin{equation} \label{eq:quiver-stack-stability-condition}
    \tau_{\vec\mu}(\alpha, \vec d) \coloneqq \begin{cases} \left(\tau(\alpha), \vec\mu \cdot \vec d/\tilde r(\alpha)\right) & \alpha \neq 0 \\ \left(\infty, \vec\mu \cdot \vec d / |\vec d|\right) & \alpha = 0, \; \vec\mu\cdot\vec d > 0 \\ \left(-\infty, \vec\mu \cdot \vec d / |\vec d|\right) & \alpha = 0, \; \vec\mu\cdot \vec d \le 0. \end{cases}
  \end{equation}
  Use the total order given by: $(a, b) \le (a', b')$ if either $a \le
  a'$, or $a=a'$ and $b \le b'$. Then $\tau_{\vec\mu}$ is a stability
  condition on $\fN^{Q(\kappa)}$, using the additivity of the
  functions $\tilde r$ and $\vec\mu \cdot (-)$. Clearly
  $\tau_{\vec\mu}$-semistability of $(E, \vec V, \vec \rho)$ implies
  $\tau$-semistability of $E$.
\end{definition}

\subsection{}

\begin{lemma} \label{lem:quiver-stack-fixed-locus-is-proper}
  If $\fN_{\alpha,\vec d}^{Q(\kappa)}$ has no strictly
  $\tau_{\vec\mu}$-semistables, then the fixed locus
  $(\fN_{\alpha,\vec d}^{Q(\kappa),\sst})^\sT$ is proper.
\end{lemma}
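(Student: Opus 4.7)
The plan is to combine quasi-projectivity of $\fN_{\alpha,\vec d}^{Q(\kappa),\sst}$---provided by the no-strictly-semistables assumption together with the GIT construction of \cite[\S 7]{Joyce2021}---with the valuative criterion for properness, where the key extensions come from Langton-type theorems for nilpotent Higgs pairs and for quiver-framed objects.

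First I identify the $\sT$-fixed locus concretely via the spectral correspondence: a $\sT$-fixed compactly-supported sheaf on $X = \tot(K_S)$ corresponds to a Higgs pair $(\bar E, \phi)$ on $S$ with $\phi$ nilpotent, equivalently to a $\bZ$-graded decomposition $\bar E = \bigoplus_i \bar E_i$ with $\phi\colon \bar E_i \to \bar E_{i+1} \otimes K_S$. A $\sT$-fixed object of $\fN_{\alpha,\vec d}^{Q(\kappa)}$ consists of such a sheaf together with quiver data in which $V_v$ for $v \in Q_0^o$ is canonically graded via $F_{\kappa(v)}(E) = \bigoplus_i F_{\kappa(v)}(\bar E_i)$, each $V_v$ for $v \in Q_0^f$ carries the trivial $\sT$-action, and the $\rho_e$ are $\sT$-equivariant. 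Boundedness of this fixed locus follows from Grothendieck boundedness applied to the graded pieces $\bar E_i$---whose Chern classes are bounded in terms of $\alpha$ because of the filtration structure coming from $\tau$-semistability of $E$---together with the fixed discrete data on the quiver side.

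For properness I check the valuative criterion. Given a DVR $R$ with fraction field $K$ and a $\sT$-fixed $\tau_{\vec\mu}$-stable family over $\operatorname{Spec} K$, I extend in two stages. The underlying Higgs pair $(\bar E_K, \phi_K)$ extends to a $\tau$-semistable nilpotent Higgs pair $(\bar E_R, \phi_R)$ over $\operatorname{Spec} R$ by Langton's theorem for Higgs pairs \cite{Tanaka2017}, with nilpotence preserved because the nilpotent cone is closed in the Hitchin base. The quiver-framed data then extends: $V_v$ for ordinary $v$ is $F_{\kappa(v)}(\bar E_R)$, locally free by $\kappa$-regularity; $V_v$ for framing $v$ extends trivially; the maps $\rho_e$ extend uniquely by flatness. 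A second Langton-type argument for quiver-framed stability, in the spirit of \cite[Theorem 7.25]{Joyce2021}, ensures the extension remains $\tau_{\vec\mu}$-semistable, hence stable by hypothesis; the limit is automatically $\sT$-fixed because the entire family is $\sT$-equivariant.

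The main obstacle I anticipate is organizing the two Langton steps in tandem while tracking $\sT$-equivariance. A cleaner alternative I would try first uses that $\Pi_\alpha$ restricts on $\tau_{\vec\mu}$-semistable loci to a \emph{projective} morphism: its fibers are GIT quotients of the affine space $\bigoplus_{[v \to v'] \in Q_1} \Hom(V_v, V_{v'})$ by the action of $\prod_{v \in Q_0^f} \GL(d_v)$, with stability governed by $\vec\mu$. Then properness of $(\fN_{\alpha,\vec d}^{Q(\kappa),\sst})^\sT$ reduces to properness of the $\sT$-fixed locus of the coarse moduli space of $\tau$-semistable Higgs pairs downstairs, which in turn follows from properness of the Hitchin-type map for semistable Higgs sheaves on $S$ established in \cite{Tanaka2020}.
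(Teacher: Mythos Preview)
Your approach is genuinely different from the paper's, and while the strategy is plausible, the sketch has real gaps.

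The paper bypasses Langton entirely by a compactification trick: view $X$ as an open subset of the projective threefold $\hat X \coloneqq \bP(K_S \oplus \cO_S)$, and form the analogous quiver-framed stacks $\hat\fN_{\alpha,\vec d}^{Q(\kappa)} \subset \hat\fM_{\alpha,\vec d}^{Q(\kappa)}$ for sheaves on $\hat X$. A $\sT$-fixed compactly supported sheaf on $X$ is scheme-theoretically supported on a thickening of $S$, hence is already a coherent sheaf on $\hat X$, and semistability is unchanged. This gives a chain of closed immersions
\[
  \left(\fN_{\alpha,\vec d}^{Q(\kappa),\sst}\right)^\sT \hookrightarrow \left(\hat\fN_{\alpha,\vec d}^{Q(\kappa),\sst}\right)^\sT \hookrightarrow \hat\fN_{\alpha,\vec d}^{Q(\kappa),\sst} \hookrightarrow \hat\fM_{\alpha,\vec d}^{Q(\kappa),\sst},
\]
and the rightmost space is outright proper by Joyce's general results for quiver-framed sheaves on a \emph{projective} variety. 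No Hitchin map, no valuative criterion, no tracking of $\sT$-equivariance through modifications.

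Your Langton route is acknowledged in the paper only as a footnote, pointing to a separate paper where the machinery is actually developed. In your sketch, two steps are not justified. First, after you modify $\bar E_R$ by Langton, the quiver maps $\rho_e$ over $K$ do \emph{not} extend ``uniquely by flatness'': a $K$-linear map into $F_{\kappa(v')}(\bar E_R)\otimes K$ extends to $R$ only after clearing denominators, and different choices of scaling change the central fiber, possibly destabilizing it. The ``second Langton-type argument'' you invoke to fix this is precisely the nontrivial content, not something one can cite. Second, the claim that the limit is $\sT$-fixed because the family is $\sT$-equivariant presumes that each elementary modification is $\sT$-equivariant; the standard Langton step takes a maximally destabilizing subsheaf, which need not be homogeneous for the grading unless you build this in from the start. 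Your alternative via properness of the Hitchin map is closer in spirit to a correct argument, but $\Pi_\alpha$ lands in the \emph{stack} ${}_\kappa\fN_\alpha$, not in a coarse moduli scheme, so ``projective over the base'' does not directly give what you want without passing through a good moduli space and checking compatibility of the fixed loci.
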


\begin{proof}
  View $X$ as an open locus in the projective $3$-fold
  $\hat X \coloneqq \bP(K_S \oplus \cO_S)$, and let
  $\hat \fN_{\alpha,\vec d}^{Q(\kappa)}$ and
  $\hat \fM_{\alpha,\vec d}^{Q(\kappa)}$ be the analogue of
  $\fN_{\alpha,\vec d}^{Q(\kappa)}$ and
  $\fM_{\alpha,\vec d}^{Q(\kappa)}$ respectively on $\hat X$ instead
  of $X$. It suffices to prove that
  $\hat \fM_{\alpha,\vec d}^{Q(\kappa),\sst}$ is proper, because of
  the chain of closed immersions
  \[ \left(\fN_{\alpha,\vec d}^{Q(\kappa),\sst}\right)^\sT \subset \left(\hat \fN_{\alpha,\vec d}^{Q(\kappa),\sst}\right)^\sT \subset \hat \fN_{\alpha,\vec d}^{Q(\kappa),\sst} \subset \hat \fM_{\alpha,\vec d}^{Q(\kappa),\sst}. \]
  To get properness of $\hat \fM_{\alpha,\vec d}^{Q(\kappa),\sst}$, we
  invoke the results of \cite[\S 7.4]{Joyce2021} in order to avoid
  lengthy technicalities. \footnote{We give machinery for a more
  self-contained and elementary proof in \cite[\S 4.3]{KLT}, using the
  valuative criterion and Langton's elementary modifications.} The
  machinery there \cite[Condition 7.17]{Joyce2021} requires either
  $\dim \alpha = 0, 1, 3$ \cite[Proposition 7.19]{Joyce2021}, which we
  do not satisfy, or the polarization for Gieseker stability to be a
  rational K\"ahler class \cite[Proposition 7.18]{Joyce2021}, which we
  do satisfy.
\end{proof}

\section{Refined pairs invariants}
\label{sec:joyce-song}

\subsection{}

We begin with some generalities on equivariant K-theory. Let $M$ be a
finite-type scheme with an action by a torus $\sT$, and suppose $M$
has the $\sT$-equivariant resolution property \cite[\S 2]{Totaro2004};
we say that {\it equivariant localization holds} for such $M$.
\footnote{Recent technical advances make the resolution property
  unnecessary \cite[\S 5]{Aranha2022}, at least for a certain type of
  Borel--Moore homology, and in full (classical) generality $M$ is
  allowed to be any finite-type Deligne--Mumford stack with
  $\sT$-action. } Specifically, for us, $M$ will be a quasi-projective
scheme. Let $K_\sT(M)$, resp. $K_\sT^\circ(M)$, be the Grothendieck
group of $\sT$-equivariant coherent sheaves on $M$, resp. perfect
complexes (equivalently, by the resolution property, vector bundles),
on $M$. Both are modules for $\bk_\sT \coloneqq K_\sT(\pt)$. Set
\begin{equation} \label{eq:localized-base-ring}
  \bk_{\sT,\loc} \coloneqq \bk_\sT[(1 - w)^{-1} : w \neq 1],
\end{equation}
where $w$ ranges over non-trivial weights of $\sT$. Let
$\iota\colon M^\sT \hookrightarrow M$ be the inclusion of the
$\sT$-fixed locus. A perfect obstruction theory $\bE_M$ on $M$ gives:
\begin{itemize}
\item virtual structure sheaves $\cO_M^\vir \in K_\sT(M)$ and
  $\cO_{M^\sT}^\vir \in K_\sT(M^\sT)$ \cite{Ciocan-Fontanine2009,
    Fantechi2010}, the latter because the fixed part
  $\bE_{M^\sT} \coloneqq (\iota^*\bE_M)^\sT$ is a perfect obstruction
  theory on $M^\sT$ \cite{Graber1999};
\item on
  $K_\sT(M)_\loc \coloneqq K_\sT(M) \otimes_{\bk_\sT} \bk_{\sT,\loc}$,
  the equivariant localization formula \cite{Qu2018}
  \begin{equation} \label{eq:equivariant-localization}
    \cO_M^\vir = \iota_* \frac{\cO_{M^\sT}^\vir}{\se(N^\vir_\iota)} \in K_\sT(M)_\loc
  \end{equation}
  where $\se$ is the K-theoretic Euler class and $(N^\vir_\iota)^\vee$
  is the non-fixed part of $\iota^*\bE_M$.
\end{itemize}
Using the resolution property, $N^\vir_\iota$ can be presented as a
two-term complex of vector bundles $[E_0 \to E_1]$ all of whose
$\sT$-weights are non-trivial, and, for such complexes, the
K-theoretic Euler class $\se$ is
\begin{equation} \label{eq:k-theoretic-euler-class}
  \se(E) \coloneqq \se(E_0)/\se(E_1) \in K_\sT(M)_\loc, \qquad \se(E_k) \coloneqq \sum_{i \ge 0} (-1)^i \wedge^i\! E_k^\vee.
\end{equation}
When there is no ambiguity, we sometimes omit the subscripts on
$\cO^\vir$ and $N^\vir$.

\subsection{}
\label{sec:symmetrized-virtual-localization}

Suppose that, in addition, the K-theory class of $\bE_M$ is symmetric.
Then we also get:
\begin{itemize}
\item a canonical square root of the restriction of $K^\vir_M
  \coloneqq \det \bE_M$ to $M^\sT$, given by
  \begin{equation} \label{eq:virtual-half-canonical}
    K^{\vir,\frac{1}{2}}_M \coloneqq t^{-\frac{1}{2} \rank \bE_{>0}} \det \bE_{>0}
  \end{equation}
  where $\bE_{>0}$ is the summand of $\iota^*\bE_M$ with positive
  $\sT$-weight \cite[Proposition 2.6]{Thomas2020}; \footnote{Our
    conventions differ slightly from those of Thomas because our $t$
    is his $t^{-1}$.}
\item a {\it symmetrized} virtual cycle (by twisting
  \eqref{eq:equivariant-localization} by $K^{\vir,\frac{1}{2}}_M$)
  \begin{equation} \label{eq:symmetrized-virtual-cycle}
    \hat\cO_M^\vir \coloneqq \iota_* \left(\frac{\cO_{M^\sT}^\vir}{\se(N^\vir_\iota)} \otimes K^{\vir,\frac{1}{2}}_M\right) \in K_\sT(M)_\loc[t^{\pm \frac{1}{2}}].
  \end{equation}
\end{itemize}
Unlike $\iota^* K_M^{\vir}$, neither $K^{\vir}_M$ nor $\det
N^\vir_\iota$ is guaranteed to admit a square root. However, in
certain situations, e.g. \S\ref{sec:master-space-relation-proof}, 
\[ (N^\vir_\iota)^\vee = F - t F^\vee + \cdots \in K^\circ_\sT(M^\sT) \]
for some K-theory class $E \coloneqq F - t F^\vee$ whose
contribution\footnote{Since $\iota^*\bE_M = (N_\iota^\vir)^\vee +
\bE_{M^\sT}$, its determinant factorizes as $K_M^{\vir} =
\det(N_\iota^\vir)^\vee \det(\bE_{M^{\sT}})$.} to
\eqref{eq:virtual-half-canonical} we wish to isolate. If $E_{>0}
\coloneqq F_{>0} - t (F_{\le 0})^\vee$ is the summand of $E$ with
positive $\sT$-weight, then
\begin{equation} \label{eq:half-canonical-for-balanced-classes}
  t^{-\frac{1}{2} \rank E_{>0}} \det E_{>0} = t^{-\frac{1}{2} \rank F} \det F.
\end{equation}
Accordingly, define the {\it symmetrized} K-theoretic Euler class
$\hat\se(E) \coloneqq t^{-\frac{1}{2} \rank F} \det F \otimes \se(E)$.
Sometimes we write $\hat \se(F^\vee)/\hat \se(t^{-1} F) \coloneqq \hat
\se(E^\vee)$ --- a mild but suggestive abuse of notation.

\subsection{}

A useful result of Thomas is that the K-theoretic virtual cycle
$\cO^\vir_M$ depends only on the K-theory class of the perfect
obstruction theory $\bE_M$ \cite{Thomas2022}. Then clearly the same is
true for $\hat\cO^\vir_M$. We will use this fact to simplify some
calculations in the remainder of this paper, in situations where $M$
has two perfect obstruction theories constructed in different ways.
Rather than checking that they are isomorphic, which often involves
some annoying diagram-chasing, it suffices to check that they are
equal in K-theory and therefore induce the same (symmetrized)
K-theoretic virtual cycle.

\subsection{}

\begin{definition} \label{def:refined-pairs-invariant}
  Fix $k \gg 0$ and consider the quiver
  \begin{equation} \label{eq:pairs-quiver}
    Q(k) \coloneqq
    \begin{tikzpicture}
      \coordinate[vertex, label=right:$F_k(E)$] (E);
      \coordinate[framing, left=of E, label=left:$V$] (V);
      \draw (E) -- node[label=above:$\rho$]{} (V);
    \end{tikzpicture}
  \end{equation}
  for class $\alpha$ and framing dimension $d = 1$. On
  $\fN^{Q(k)}_{\alpha,1}$, take the symmetric obstruction theory
  obtained by symmetrized pullback along $\Pi_\alpha$, and the
  stability condition $\tau_{\vec\mu}$ from
  \eqref{eq:quiver-stack-stability-condition} with $\mu = 1$. There
  are no strictly semistables \cite[Example 5.6]{Joyce2021}, so the
  semistable locus is a quasi-projective scheme. The {\it refined
    pairs invariant}
  \begin{equation} \label{eq:pairs-invariant}
    \tilde\VW_\alpha(k, t) \coloneqq \chi\left(\fN^{Q(k),\sst}_{\alpha,1}, \hat\cO^\vir\right) \in \bk_{\sT,\loc}[t^{\pm \frac{1}{2}}]
  \end{equation}
  is well-defined by $\sT$-equivariant localization
  (Lemma~\ref{lem:quiver-stack-fixed-locus-is-proper}).
\end{definition}

\subsection{}

\begin{remark} \label{rem:our-vs-JS-pairs}
  The refined pairs invariant in \cite[\S 5]{Thomas2020} is defined
  like in \eqref{eq:pairs-invariant} but using a moduli scheme of
  stable Joyce--Song pairs and its Joyce--Song perfect obstruction
  theory. These can be matched with
  Definition~\ref{def:refined-pairs-invariant}.

  First, recall that a Joyce--Song pair $(E, s)$ is a point
  $[E] \in \fN$ along with a non-zero section
  $s\colon \cO_X(-k) \to E$, and it is stable if and only if:
  \begin{itemize}
  \item $E$ is $\tau$-semistable;
  \item if $s$ factors through $0 \neq E' \subsetneq E$, then
    $\tau(E') < \tau(E)$.
  \end{itemize}
  Clearly the data $(E, s)$ is equivalent to the data $(E, V, \rho)$
  where $\dim V = 1$, and one can check \cite[Example 5.6]{Joyce2021}
  that $\tau_1$-stability is equivalent to Joyce--Song stability.
  Hence $\fN^{Q(k),\sst}_{\alpha,1}$ is indeed a moduli scheme of
  stable Joyce--Song pairs.

  Second, by \cite[(6.2)]{Tanaka2017}, the standard Joyce--Song-style
  symmetric perfect obstruction theory on $\fN^{Q(k)}_{\alpha,1}$ is
  given by $R\Hom_X(I^\bullet, I^\bullet)_\perp$ (up to a dual and
  shift) at the point $I^\bullet \coloneqq [\cO(-k) \xrightarrow{s} E]
  \in D^b\cat{Coh}_\sT(X)$, where
  \[ R\Hom_X(I^\bullet, I^\bullet) \cong R\Hom_X(I^\bullet, I^\bullet)_\perp \oplus H^*(\cO_X) \oplus H^{\ge 1}(\cO_S) \oplus H^{\le 1}(K_S)[-1] \]
  in the derived category. The hard work in \cite{Tanaka2020}, see
  also \cite[Remark 6.3]{Tanaka2017}, may be adapted to show that this
  holds in families. Using the obvious exact triangles and
  \eqref{eq:VW-reduced-obstruction-theory}, in K-theory
  \begin{alignat}{3} 
    R\Hom_X(I^\bullet, I^\bullet)_\perp
    &= R\Hom_X(E, E) && - R\Hom_X(\cO_X(-k), E) - H^{\ge 1}(\cO_S) \nonumber \\
    {} & {} && - R\Hom_X(E, \cO_X(-k)) + H^{\le 1}(K_S) \nonumber \\
    &= R\Hom_X(E, E)_\perp && - \left(R\Hom_X(\cO_X(-k), E) - H^0(\cO_S)\right) \label{eq:pairs-obstruction-theory} \\
    {} & {} && + \left(R\Hom_X(\cO_X(-k), E) - H^0(\cO_S)\right)^\vee \otimes t^{-1}. \nonumber
  \end{alignat}
  This computation clearly also works over families. Comparing with
  \eqref{eq:symmetrized-pullback-k-class}, and using the fact that
  $R^{>0}\Hom_X(\cO_X(-k), E) = 0$, this is the same K-theory class as
  the (dual of a shift of the) obstruction theory obtained from
  symmetrized pullback, and therefore induces the same $\hat\cO^\vir$.
\end{remark}

\subsection{}

\begin{proposition}[{\cite[Proposition 5.5]{Thomas2020}}] \label{prop:symmetrized-integration-on-projective-bundle}
  If $\alpha$ has no strictly semistables, then
  \eqref{eq:semistable-VW-invariant} becomes
  \[ \tilde\VW_\alpha(k, t) = [\lambda_k(\alpha)]_t \cdot \VW_\alpha(t). \]
  The projection
  $\Pi_\alpha\colon \fN^{Q(k),\sst}_{\alpha,1} \to \fN^\sst_\alpha$ is
  a $\bP^{\lambda_k(\alpha)-1}$-bundle, and $\VW_\alpha(t)$ is given
  by \eqref{eq:stable-VW-invariant}.
\end{proposition}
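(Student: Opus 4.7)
The plan has three stages: identify the projective-bundle structure of $\Pi_\alpha$, compute the K-theory class of the symmetrized-pullback POT, and perform a $\bP^{n-1}$-bundle integration to extract the factor $[n]_t$, where $n \coloneqq \lambda_k(\alpha)$.

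First, I would verify the projective-bundle structure. When $\alpha$ has no strictly $\tau$-semistables, an inspection of Definition~\ref{def:stability-conditions} shows that $\tau_1$-semistability of a triple $(E, V, \rho)$ with $\dim V = 1$ reduces to $E$ being $\tau$-stable (automatic) and $\rho \neq 0$. Quotienting by the framing $\GL(V)$ identifies $P \coloneqq \fN^{Q(k),\sst}_{\alpha,1}$ with the projective bundle $\bP(F_k(\cE))$ over $B \coloneqq \fN_\alpha^\sst = \fN_\alpha^\st$, where $F_k(\cE)$ is locally free of rank $n$ by $k$-regularity; in particular $\VW_\alpha(t)$ is well-defined by \eqref{eq:stable-VW-invariant}. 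Combining \eqref{eq:symmetrized-pullback-k-class} with the Euler sequence for $\Pi_\alpha$ yields the K-theory class of the symmetrized-pullback POT on $P$:
\[ \bE_P = \Pi_\alpha^* \bE_B + F - tF^\vee, \qquad F \coloneqq \Pi_\alpha^* F_k(\cE)^\vee \otimes \cO_P(-1) - \cO_P, \]
with $\rank F = n - 1$. By \cite{Thomas2022}, $\hat\cO^\vir_P$ depends only on this K-theory class.

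The main step is the pushforward identity
\[ (\Pi_\alpha)_* \hat\cO^\vir_P = [n]_t \cdot \hat\cO^\vir_B \in K_\sT(B)_\loc[t^{\pm \frac{1}{2}}], \]
which, combined with the projection formula, yields $\tilde\VW_\alpha(k,t) = [n]_t \VW_\alpha(t)$. To verify it I would use equivariant localization over $B^\sT$: for $b \in B^\sT$, the $\sT$-weight decomposition $F_k(\cE)|_b = \bigoplus_w F_k(\cE)_{b,w}$ gives $\bP(F_k(\cE)|_b)^\sT = \bigsqcup_w \bP(F_k(\cE)_{b,w})$, and the explicit form of $F - tF^\vee$ determines the fixed-point contributions from $K^{\vir,\frac{1}{2}}_P$ and $\se(N^\vir)$. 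A partial-fractions identity then sums these contributions to $[n]_t$, independently of the weights. As a sanity check, when $B = \mathrm{pt}$ and $F_k(\cE) = \bC^n$ is $\sT$-trivial, $F$ coincides with $\bL_{\bP^{n-1}}$, so $\bE_P$ simplifies to $\bL_{\bP^{n-1}} - tT_{\bP^{n-1}}$, yielding
\[ \hat\cO^\vir_{\bP^{n-1}} = t^{-\frac{n-1}{2}} \cO(-n)(1 - t\cO(1))^n/(1-t); \]
expanding binomially and using $\chi_{\bP^{n-1}}(\cO(i-n)) = (-1)^{n-1}\delta_{i,0} + \delta_{i,n}$ for $i \in \{0, \ldots, n\}$ collapses the result to $(-1)^{n-1}(1 - t^n) t^{-(n-1)/2}/(1-t) = [n]_t$.

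The main obstacle is proving the weight-independence in the general case: that the sum of fixed-point contributions over $\bP(V)$ equals $[n]_t$ regardless of the $\sT$-weight decomposition of a general $\sT$-equivariant $V$ of rank $n$. This can be handled by treating the weights as formal variables and reducing to a universal K-theoretic partial-fractions identity, or alternatively by an equivariant virtual projection formula using that $F - tF^\vee$ is a symmetric relative class for the smooth morphism $\Pi_\alpha$.
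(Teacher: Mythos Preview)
Your overall plan is sound and the K-class and sanity-check computations are correct; in fact your $F$ equals $\Omega_{\Pi_\alpha}$ in K-theory via the Euler sequence, so your formula for $\bE_P$ matches the paper's exactly. The difference lies in how you handle the pushforward along $\Pi_\alpha$. You propose equivariant localization on the fibers and then a weight-independence argument via partial fractions. The paper avoids this entirely: once one recognizes the relative obstruction as the vector bundle $t^{-1}\Omega_{\Pi_\alpha}$, virtual pullback gives $\hat\cO^\vir_P = \Pi_\alpha^*\hat\cO^\vir_B \otimes \se(t^{-1}\Omega_{\Pi_\alpha}) \otimes t^{-(n-1)/2}K_{\Pi_\alpha}$, and the pushforward of the relative factor is a combination of the bundles $R^i\Pi_{\alpha*}\Omega^j_{\Pi_\alpha}$, which are \emph{canonically} trivialized by powers of the hyperplane class. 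Hence the pushforward is computed by the single fiber calculation $\chi(\bP^{n-1},\se(t^{-1}\Omega)\otimes K)\,t^{-(n-1)/2}=\sum_{i,j}(-1)^{i+j}h^{i,j}(\bP^{n-1})\,t^{j-(n-1)/2}=[n]_t$, with no localization and no weight-independence issue. Your approach would also work, but what you flag as the ``main obstacle'' simply does not arise in the paper's route.

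One small omission: the proposition also asserts that, in the formula \eqref{eq:semistable-VW-invariant}, only the $n=1$ term contributes and that the resulting $\VW_\alpha(t)$ agrees with the stable invariant \eqref{eq:stable-VW-invariant}. The paper handles this by an induction on rank: a nonzero contribution with $n>1$ would force each $\fN^{\sst}_{\alpha_i}$ to be nonempty, and a direct sum of such objects would be strictly semistable in $\fN^{\sst}_\alpha$, a contradiction. Your direct computation establishes $\tilde\VW_\alpha=[\lambda_k(\alpha)]_t\cdot\chi(\fN^{\sst}_\alpha,\hat\cO^\vir)$, but you should still say a word about why the higher-$n$ terms in \eqref{eq:semistable-VW-invariant} vanish.
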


\begin{proof}[Proof sketch.]
  Induct on the rank $r$ of $\alpha = (r, c_1, c_2)$. If
  \eqref{eq:semistable-VW-invariant} had a non-zero contribution
  indexed by $\alpha_1, \ldots, \alpha_n$ with $n > 1$, then the
  non-vanishing of the $\VW_{\alpha_i}(t)$ (which equal
  \eqref{eq:stable-VW-invariant} by the induction hypothesis) would
  imply the $\fN_{\alpha_i}^{\sst}$ are non-empty, and picking an
  element $E_i$ in each would give a strictly semistable
  $[E_1 \oplus \cdots \oplus E_n] \in \fN_\alpha^\sst$, a
  contradiction. So only the $n=1$ term in
  \eqref{eq:semistable-VW-invariant} is present.

  It is also easy to check that, if $E$ is stable, then $(E, V, \rho)$
  is (semi)stable if and only if $\rho \neq 0$, so indeed $\Pi_\alpha$
  is a $[\Hom(V, E(k)) \setminus \{0\}/\bC^\times]$-bundle.

  Finally, note that the first line of
  \eqref{eq:pairs-obstruction-theory} is the part of the obstruction
  theory on $\fN^{Q(k)}_{\alpha,1}$ compatible with the one on
  $\fN_\alpha$, and the second line is the vector bundle
  $\Omega_{\Pi_\alpha} \otimes t^{-1}$, so by virtual pullback
  \cite{Manolache2012} or otherwise,
  \[ \hat\cO^\vir_{\fN^{Q(k),\sst}_{\alpha,1}} = \Pi_\alpha^* \hat\cO^\vir_{\fN^{\sst}_\alpha} \otimes \se\left(\Omega_{\Pi_\alpha} \otimes t^{-1}\right) \otimes t^{-\frac{1}{2} \dim \Pi_\alpha} K_{\Pi_\alpha} \]
  where $K_{\Pi_\alpha} \coloneqq \det \Omega_{\Pi_\alpha}$ is the
  relative canonical. The $t^{-\frac{1}{2} \dim \Pi_\alpha}
  K_{\Pi_\alpha}$ term is the contribution of $\Omega_{\Pi_\alpha} -
  t\Omega_{\Pi_\alpha}^\vee$ to
  $K^{\vir,\frac{1}{2}}_{\fN_{\alpha,1}^{Q(k),\sst}}$; see
  \eqref{eq:virtual-half-canonical} and
  \eqref{eq:half-canonical-for-balanced-classes}. Applying
  $\Pi_{\alpha *}$ and using the projection formula, it remains to
  compute $\Pi_{\alpha *} \left(\se\left(\Omega_{\Pi_\alpha} \otimes
  t^{-1}\right) \otimes K_{\Pi_\alpha}\right)$. This is an alternating
  sum of relative cohomology bundles $R^i\Pi_{\alpha
    *}\Omega^j_{\Pi_\alpha}$, which are all canonically trivialized by
  powers of the (fiberwise) hyperplane class. So it is enough to
  compute on fibers:
  \begin{align*}
    \chi\left(\bP^N, \se\left(\Omega_{\bP^N} \otimes t^{-1}\right) \otimes K_{\bP^N}\right) t^{-\frac{N}{2}}
    &= (-1)^N t^{-\frac{N}{2}} \sum_{i,j=0}^N (-1)^{i+j} \dim H^{i,j}(\bP^N) t^j \\
    &= [N+1]_t \in \bZ[t^{\pm \frac{1}{2}}],
  \end{align*}
  using that the modules $H^{i,j}(\bP^N) \subset H^{i+j}(\cO)$ are
  $\sT$-equivariantly trivial by Hodge theory. This computation
  motivates the sign $(-1)^{N-1}$ in the definition
  \eqref{eq:quantum-integer} of $[N]_t$.
\end{proof}

\subsection{}
\label{sec:U-pairs-invariants}

To be precise, our $\tilde\VW_\alpha$ is the so-called refined
$\mathrm{SU}(r)$ pairs invariant. The $\mathrm{S}$ here corresponds to
fixing $\det \bar E = L$ and $\tr \phi = 0$, i.e. using the moduli
stack $\fN_\alpha \subset \fM_\alpha$ instead of the entire
$\fM_\alpha$. There is an analogous $\mathrm{U}(r)$ pairs invariant
\[ \chi\left(\fM_{\alpha,1}^{Q(k),\sst}, \hat\cO^\vir\right) \]
where the symmetric obstruction theory on $\fM_\alpha$ is also given
by \eqref{eq:VW-reduced-obstruction-theory} except the subscript $0$
on the latter two terms now only removes a copy of $H^0(\cO_S)$ and
$H^2(K_S) \cong H^0(\cO_S)^\vee$ respectively, cf. \cite[\S
  4.1]{Tanaka2020}. This $\mathrm{U}(r)$ pairs invariant agrees with
$\tilde \VW_\alpha$ if $H^1(\cO_S) = H^2(\cO_S) = 0$. Otherwise:
\begin{itemize}
\item if $H^2(\cO_S) \neq 0$, then the summand $H^2(\cO_S)$ in
  $R\Hom_S(\bar E, \bar E)_0$ forms a trivial sub-bundle in the
  obstruction sheaf on $\fM$, so $\hat\cO^\vir = 0$;
\item if $H^1(\cO_S) \neq 0$, then $\Pic_0(S)$ is non-trivial and acts
  on $\fM$ by tensoring. This action has no fixed points and defines a
  $\dim H^1(\cO_S)$-dimensional space of nowhere-vanishing sections of
  the tangent sheaf on $\fM$, which is equivalently a $\dim
  H^1(\cO_S)$-dimensional space of nowhere-vanishing ``$t$-twisted''
  cosection (i.e. landing in $t \otimes \cO$ instead of $\cO$) of the
  obstruction sheaf on $\fM$ by the symmetry of the obstruction
  theory. Lemma~\ref{lem:symmetrized-pullback-cosection} lifts this to
  a $\dim H^1(\cO_S)$-dimensional space of nowhere-vanishing
  ``$t$-twisted'' cosection of the obstruction sheaf on $\fM^{Q(k)}$,
  so $\hat\cO^\vir$ contains a factor of $(1 - t)^{\dim H^1(\cO_S)}$
  by cosection localization \cite{Kiem2013a}. Thus it vanishes modulo
  $(1 - t)^{\dim H^1(\cO_S)}$.
\end{itemize}
These vanishings are important motivation for using
$\fN_\alpha \subset \fM_\alpha$ to define $\tilde\VW_\alpha$, and will
also be important in the master space calculation in
\S\ref{sec:mixed-fixed-locus-cases}.

\section{A master space calculation}
\label{sec:master-space-calculation}

\subsection{}

\begin{definition} \label{def:master-space}
  Let $k_1, k_2 \gg 0$ and, as in \cite[Definition 9.4]{Joyce2021},
  consider the quiver
  \begin{equation} \label{eq:master-space-quiver}
    \tilde Q(k_1, k_2) \coloneqq \begin{tikzpicture}[node distance=0.3cm and 1cm]
      \coordinate[framing, label=below:$V_3$] (V3);
      \coordinate[framing, above right=of V3, label=above:$V_1$] (V1);
      \coordinate[framing, below right=of V3, label=below:$V_2$] (V2);
      \coordinate[vertex, right=of V1, label=right:$F_{k_1}(E)$] (F1);
      \coordinate[vertex, right=of V2, label=right:$F_{k_2}(E)$] (F2);
      \draw[->] (V3) -- node[label=above:$\rho_3$]{} (V1);
      \draw[->] (V1) -- node[label=above:$\rho_1$]{} (F1);
      \draw[->] (V3) -- node[label=below:$\rho_4$]{} (V2);
      \draw[->] (V2) -- node[label=below:$\rho_2$]{} (F2);
    \end{tikzpicture}
  \end{equation}
  for class $\alpha$ and framing dimension $\vec d = \vec 1 \coloneqq
  (1, 1, 1)$. On $\fN^{\tilde Q(k_1,k_2)}_{\alpha,\vec 1}$, take the
  symmetrized obstruction theory given by symmetrized pullback along
  $\Pi_\alpha$, and the stability condition
  $\tau_{\epsilon,\epsilon,1}$ for $\epsilon > 0$. For sufficiently
  small $\epsilon$ (depending on $\alpha$), there are no strictly
  semistables \cite[Definition 9.4]{Joyce2021} and the semistable
  locus $M_\alpha \coloneqq \fN_{\alpha,\vec 1}^{\tilde
    Q(k_1,k_2),\sst}$ is a quasi-projective scheme. We call it the
  {\it master space}.

  Let $\bC^\times$ act on $M_\alpha$ by scaling the linear map
  $\rho_4$ with a weight which we denote $z$. This action commutes
  with the inherited $\sT$-action and the obstruction theory may be
  made $\bC^\times$-equivariant as well. The quantity
  \[ \chi\left(M_\alpha, \hat\cO^\vir\right) \in \bk_{\bC^\times \times \sT,\loc}[t^{\pm \frac{1}{2}}] \]
  is well-defined by $(\bC^\times \times \sT)$-equivariant
  localization (Lemma~\ref{lem:quiver-stack-fixed-locus-is-proper}).
  We will apply the following
  Proposition~\ref{prop:master-space-relation} to $M_\alpha$ to get a
  wall-crossing formula.
\end{definition}

\subsection{}

\begin{proposition} \label{prop:master-space-relation}
  Let $M$ be a scheme with $(\bC^\times \times \sT)$-action and
  $(\bC^\times \times \sT)$-equivariant symmetric perfect obstruction
  theory for which equivariant localization holds, and let
  $\iota_{\bC^\times}\colon M^{\bC^\times} \hookrightarrow M$ be the
  $\bC^\times$-fixed locus. Suppose that:
  \begin{enumerate}
  \item $\chi(M, \hat\cO^\vir)$ is well-defined and lies in
    $\bk_{\sT,\loc}[t^{\pm \frac{1}{2}}, z^\pm] \subset
    \bk_{\bC^\times \times \sT,\loc}[t^{\pm \frac{1}{2}}]$;
  \item
    $N^\vir_{\iota_{\bC^\times}} = F^\vee - t^{-1} F \in K^\circ_{\bC^\times \times \sT}(M^{\bC^\times})$
    for some K-theory class $F$.
  \end{enumerate}
  Let $F_{>0}$ and $F_{<0}$ be the parts of $F$ with positive and
  negative $\bC^\times$-weight respectively, and set
  $\ind \coloneqq \rank F_{>0} - \rank F_{<0}$. Then
  \begin{equation} \label{eq:master-space-relation}
    0 = \chi\bigg(M^{\bC^\times}, \hat\cO^\vir_{M^{\bC^\times}} \otimes (-1)^{\ind}(t^{\frac{\ind}{2}} - t^{-\frac{\ind}{2}})\bigg).
  \end{equation}
\end{proposition}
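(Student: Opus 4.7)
The plan is to apply $\bC^\times$-equivariant localization to $\chi(M, \hat\cO^\vir)$, use the balanced form in hypothesis (2) to make the $z$-dependence of the integrand explicit, and then exploit hypothesis (1) that the total is a Laurent polynomial in $z$. The key observation will be that each $\bC^\times$-fixed component's contribution is individually regular at both $z = 0$ and $z = \infty$, which combined with the Laurent polynomial hypothesis forces the total to be constant in $z$; equating the values at $z = 0$ and $z = \infty$ then yields the claimed vanishing.

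I would first establish the symmetrized $\bC^\times$-equivariant localization formula
\[ \chi(M, \hat\cO^\vir) = \chi\!\left(M^{\bC^\times}, \frac{\hat\cO^\vir_{M^{\bC^\times}}}{\hat\se(N^\vir_{\iota_{\bC^\times}})}\right) \in \bk_{\bC^\times \times \sT, \loc}[t^{\pm \frac{1}{2}}]. \]
This follows from iterated $\bC^\times$- and $\sT$-localization, but requires verifying that $K^{\vir, 1/2}_M$ decomposes along $M^{\bC^\times}$ as $K^{\vir, 1/2}_{M^{\bC^\times}}$ tensored with a canonical square root of $\det \bE_m$. Hypothesis (2) provides exactly this square root: writing $\bE_m = F - t F^\vee$ gives $\det \bE_m = (\det F)^2 / t^{\rank F}$, and its canonical square root $\det F \cdot t^{-\rank F/2}$ absorbs into the $\hat\se$ of the balanced class $N^\vir = F^\vee - t^{-1}F$.

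Next, on each connected component of $M^{\bC^\times}$ I decompose $F$ into $\bC^\times$-eigen-summands $F = \bigoplus_j \ell_j z^{w_j}$, with $\ell_j$ of trivial $\bC^\times$-weight and $w_j \in \bZ \setminus \{0\}$. A routine application of the identities in \S\ref{sec:symmetrized-virtual-localization} applied to $E^\vee = F^\vee - t^{-1}F$ yields
\[ \frac{1}{\hat\se(N^\vir_{\iota_{\bC^\times}})} = (-1)^{\rank F}\, t^{-\rank F/2} \prod_j \frac{\ell_j z^{w_j} - t}{\ell_j z^{w_j} - 1}. \]
Each factor $(\ell_j z^{w_j} - t)/(\ell_j z^{w_j} - 1)$ is a rational function of $z$ regular at both $z = 0$ and $z = \infty$, taking the value $t$ or $1$ according to the sign of $w_j$. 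A short check using $\rank F \equiv \ind \pmod 2$ then shows that the contribution $I_F(z)$ of the component containing $F$ satisfies
\[ I_F(0) = (-1)^\ind\, t^{\ind/2}\, \chi(F, \hat\cO^\vir_F), \qquad I_F(\infty) = (-1)^\ind\, t^{-\ind/2}\, \chi(F, \hat\cO^\vir_F). \]

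Since each $I_F(z)$ is regular at both $z = 0$ and $z = \infty$, so is their sum $\chi(M, \hat\cO^\vir) = \sum_F I_F(z)$. Combined with hypothesis (1), this sum is a Laurent polynomial in $z$ with no negative powers (by regularity at $0$) and no positive powers (by regularity at $\infty$), hence is a constant in $z$. Equating its values at $z = 0$ and $z = \infty$ gives
\[ 0 = \sum_F \bigl(I_F(0) - I_F(\infty)\bigr) = \chi\!\left(M^{\bC^\times}, \hat\cO^\vir_{M^{\bC^\times}} \otimes (-1)^\ind(t^{\ind/2} - t^{-\ind/2})\right). \]
I expect the main obstacle to be the symmetrized $\bC^\times$-localization of the first step, which requires reconciling two distinct torus actions with the canonical square root of $K^\vir_M$; the explicit computation is routine, while the concluding observation, though simple, embodies the nontrivial content of hypothesis (1).
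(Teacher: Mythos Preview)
Your proposal is correct and follows essentially the same approach as the paper: iterated $(\bC^\times \times \sT)$-localization, the explicit product formula for $1/\hat\se(N^\vir_{\iota_{\bC^\times}})$ coming from the balanced form in hypothesis~(2), and then comparing the limits at $z\to 0$ and $z\to\infty$. The paper packages your last step via a K-theoretic residue map $\Res^K_z$ (which annihilates Laurent polynomials and, when both limits exist, returns their difference), but this is exactly your ``Laurent polynomial regular at $0$ and $\infty$ is constant'' observation; one minor caution is that your displayed $\bC^\times$-localization formula over $M^{\bC^\times}$ is not literally well-posed since $\hat\se(N^\vir_{\iota_{\bC^\times}})$ is not invertible there, so (as you indicate) one must work on $M^{\bC^\times\times\sT}$ throughout and only push forward along $\iota_\sT$ at the end.
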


The quantity $\ind$ is a kind of Morse--Bott index for connected
components $Z \subset M^{\bC^\times}$. In particular $\ind$ is
constant on each $Z$ and so the scalar
$(-1)^{\ind}(t^{\frac{\ind}{2}} - t^{-\frac{\ind}{2}})$ factors out of
$\chi(Z, \hat\cO^\vir_Z \otimes \cdots)$.

Proposition~\ref{prop:master-space-relation} is a $\sT$-equivariant
and K-theoretic version of a common procedure in cohomology (see e.g.
\cite[\S 5]{Kiem2013}, or \cite[\S 5]{Nakajima2011} for a
$\sT$-equivariant version): if $M$ is proper, take residues of both
sides of the $\bC^\times$-equivariant localization formula for
$[M]^\vir$. Accordingly, the proof of
Proposition~\ref{prop:master-space-relation}, given in
\S\ref{sec:master-space-relation-proof}, is by applying the following
K-theoretic residue map to the $\bC^\times \times \sT$-equivariant
localization formula.

\subsection{}

\begin{definition}
  Given a rational function $f \in \bk_{\bC^\times \times \sT,\loc}$,
  let $f_\pm \in \bk_{\sT,\loc}((z^\pm))$ be its formal series
  expansion around $z=0$ and $z=\infty$ respectively. The {\it
    K-theoretic residue map}, see \cite{Metzler2002} \cite[Appendix
  A]{Liu2022}, is the $\bZ$-module homomorphism
  \begin{align*}
    \Res^K_z\colon \bk_{\bC^\times \times \sT,\loc} &\to \bk_{\sT,\loc} \\
    f &\mapsto z^0 \text{ term in } (f_+ - f_-).
  \end{align*}
  Note that $\Res_z^K\colon \bk_{\sT,\loc}[z^\pm] \mapsto 0$, and
  $\Res_z^K(f) = \lim_{z \to 0} f - \lim_{z \to \infty} f$ whenever
  the right hand side exists.
\end{definition}

\subsection{}
\label{sec:master-space-relation-proof}

\begin{proof}[Proof of Proposition~\ref{prop:master-space-relation}.]
  First, factorize
  \[ \iota\colon M^{\bC^\times \times \sT} \xhookrightarrow{\iota_\sT} M^{\bC^\times} \xhookrightarrow{\iota_{\bC^\times}} M, \]
  and, in K-theory, split
  $N^\vir_\iota = \iota_\sT^* N^\vir_{\iota_{\bC^\times}} + N^\vir_{\iota_\sT}$
  into its non-$\bC^\times$-fixed part and its $\bC^\times$-fixed but
  non-$\sT$-fixed part respectively. Then
  \begin{equation} \label{eq:master-space-localization}
    \chi(M, \hat\cO^\vir_{M}) = \chi\bigg(M^{\bC^\times \times \sT}, \bigg(\frac{\cO^\vir_{M^{\bC^\times \times \sT}}}{\se(N^\vir_{\iota_\sT})} \otimes K^{\vir,\frac{1}{2}}_{M^{\bC^\times}}\bigg) \otimes \frac{1}{\hat\se(\iota_\sT^*N^\vir_{\iota_{\bC^\times}})}\bigg)
  \end{equation}
  using the definition \eqref{eq:symmetrized-virtual-cycle} of
  $\hat\cO^\vir_{M}$ and the discussion of
  \S\ref{sec:symmetrized-virtual-localization} to factor out the
  contribution of $\iota_{\sT}^*N_{\iota_{\bC^\times}}^{\vir}$. Note
  that the inner bracketed term, if pushed forward along $\iota_\sT$,
  becomes $\hat\cO^\vir_{M^{\bC^\times}}$. \footnote{It is therefore
  tempting to rewrite \eqref{eq:master-space-localization} as an
  integral over $M^{\bC^\times}$, by pushing forward the integrand
  along $\iota_\sT$, but then the second term $\hat
  \se(N_{\iota_{\bC^\times}}^\vir) \in K_{\bC^\times \times
    \sT}(M^{\bC^\times})_\loc$ is not invertible and so this doesn't
  make sense.}

  Now apply $\Res^K_z$ to \eqref{eq:master-space-localization}. By
  hypothesis, the left hand side vanishes. On the right hand side, the
  definition \eqref{eq:k-theoretic-euler-class} of K-theoretic Euler
  class gives
  \begin{equation} \label{eq:virtual-normal-bundle-in-localization}
    \frac{1}{\hat \se(\iota^*_\sT N^\vir_{\iota_{\bC^\times}})} = \frac{\hat \se(t^{-1}\otimes \iota_\sT^* F)}{\hat \se(\iota_\sT^* F^\vee)} = \prod_{z^a t^b L \in \iota_\sT^* F} \left(-t^{-\frac{1}{2}}\cdot \frac{t - z^a t^b L}{1 - z^a t^b L}\right).
  \end{equation}
  The first equality follows from the hypothesis that
  $N^\vir_{\iota_{\bC^\times}} = F^\vee - t^{-1} F$, and the second
  equality is the splitting principle. The product ranges over all
  K-theoretic Chern roots of $\iota_\sT^* F \in K^\circ_{\bC^\times
    \times \sT}(M^{\bC^\times \times \sT}) = K^\circ(M^{\bC^\times
    \times \sT}) \otimes \bk_{\bC^\times \times \sT}$. Each such Chern
  root has $a \neq 0$ --- otherwise, it would be $\bC^\times$-fixed ---
  so the $z \to 0$ and $z \to \infty$ limits of each term in the
  product exist and are $-t^{\pm \frac{1}{2}}$ depending on the sign
  of $a$. \footnote{There is a mild subtlety here because the inverse
  of $1 - z^a t^b L$ is a series expansion in $(1 - z^a t^b)^{-1} (1 -
  L)$, which terminates because $1 - L$ is nilpotent, and not a series
  expansion in $z^a t^b L$, which does not terminate because $L$ is
  not necessarily nilpotent. Nonetheless, we can still treat
  quantities like \eqref{eq:virtual-normal-bundle-in-localization}
  formally like rational functions valued in Chern roots.} The result
  is that
  \[ 0 = \chi\bigg(M^{\bC^\times \times \sT}, \bigg(\frac{\cO^\vir_{M^{\bC^\times \times \sT}}}{\se(N^\vir_{\iota_\sT})} \otimes K^{\vir,\frac{1}{2}}_{M^{\bC^\times}}\bigg) \otimes (-1)^{\ind}(t^{\frac{\ind}{2}} - t^{-\frac{\ind}{2}})\bigg). \]
  To conclude, pushforward the integrand along $\iota_\sT$, recognizing
  the resulting bracketed term as the definition of
  $\hat\cO^\vir_{M^{\bC^\times}}$.
\end{proof}

\subsection{}

\begin{lemma} \label{lem:no-z-poles}
  Let $M$ be a scheme with $(\bC^\times \times \sT)$-action and
  $(\bC^\times \times \sT)$-equivariant symmetric perfect obstruction
  theory for which equivariant localization holds. If
  \begin{enumerate}
  \item for the maximal torus $\sT_w \subset \ker(w)$ of any
    $(\bC^\times \times \sT)$-weight $w = z^a t^b$ with $a \neq 0$,
    the fixed locus $M^{\sT_w}$ is proper, and
  \item the obstruction theory is symmetric of weight $t$,
  \end{enumerate}
  then $\chi\big(M, \hat\cO^\vir\big)$ is well-defined and lies in
  $\bk_{\sT,\loc}[t^{\pm \frac{1}{2}}, z^\pm]$.
\end{lemma}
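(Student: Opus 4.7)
My plan is to establish well-definedness directly and then rule out $z$-poles one at a time via codimension-$1$ sub-torus localization.

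For well-definedness I would apply $(\bC^\times \times \sT)$-equivariant localization, which requires $M^{\bC^\times \times \sT}$ to be proper. Taking $w = z$ in assumption (1) gives $\sT_w = \sT$, so $M^\sT$ is proper; then $M^{\bC^\times \times \sT} = (M^\sT)^{\bC^\times}$ is closed in $M^\sT$, hence also proper. This yields a well-defined $\chi(M, \hat\cO^\vir) \in \bk_{\bC^\times \times \sT,\loc}[t^{\pm \frac{1}{2}}]$.

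For the harder statement, view $\chi(M, \hat\cO^\vir)$ as a rational function in $z$ and $t$. Its possible poles lie along hyperplanes $\{w = 1\}$ for $(\bC^\times \times \sT)$-characters $w = z^a t^b$; poles with $a = 0$ lie in $\bk_{\sT,\loc}$, so I must show no poles occur with $a \neq 0$. Fixing such a $w$, my plan is to re-express $\chi(M, \hat\cO^\vir)$ via $\sT_w$-equivariant virtual localization through the proper fixed locus $M^{\sT_w}$:
\[
  \chi(M, \hat\cO^\vir) = \chi\Big(M^{\sT_w},\; \cO^\vir_{M^{\sT_w}} \otimes \tilde K^{\vir,\frac{1}{2}} / \se(N^\vir_{\iota_{\sT_w}})\Big),
\]
where $\bE_{M^{\sT_w}} \coloneqq (\iota_{\sT_w}^*\bE_M)^{\sT_w}$ is the Graber--Pandharipande perfect obstruction theory giving $\cO^\vir_{M^{\sT_w}}$, and $\tilde K^{\vir,\frac{1}{2}}$ is an extension of $K^{\vir,\frac{1}{2}}_M$ from $M^{\bC^\times \times \sT}$ to $M^{\sT_w}$. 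To produce it: the K-theoretic symmetry $\bE_M + t\bE_M^\vee = 0$ restricts to $M^{\sT_w}$, and since $t|_{\sT_w} \neq 1$ (here $a \neq 0$ is essential: $t = w^k$ would force $ka = 0$ and $kb = 1$, impossible), I can split $\bE_M|_{M^{\sT_w}} = G - tG^\vee$ in K-theory and set $\tilde K^{\vir,\frac{1}{2}} \coloneqq t^{-\frac{1}{2} \rank G} \det G$, which restricts on $M^{\bC^\times \times \sT}$ to $K^{\vir,\frac{1}{2}}_M$.

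Given this formula, properness of $M^{\sT_w}$ means the pushforward $\chi(M^{\sT_w}, -)$ is defined on unlocalized equivariant K-theory, so the only denominators in the final expression come from $\se(N^\vir_{\iota_{\sT_w}})^{-1}$. The weights of $N^\vir_{\iota_{\sT_w}}$ are $(\bC^\times \times \sT)$-characters $\chi'$ nontrivial on $\sT_w$, hence not of the form $w^k$ for any $k$, so these denominators $(1 - \chi')$ are regular at $w = 1$. Thus $\chi(M, \hat\cO^\vir)$ is regular at $w = 1$, as required. The hard part will be justifying the two-step symmetrized localization formula above --- especially the construction and canonicity of $\tilde K^{\vir,\frac{1}{2}}$ on $M^{\sT_w}$ and its compatibility with the successive localizations from $M$ to $M^{\sT_w}$ to $M^{\bC^\times \times \sT}$. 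Thomas's rigidity result \cite{Thomas2022}, that K-theoretic virtual structure sheaves depend only on the K-theory class of the obstruction theory, should reduce this to identities in equivariant K-theory and bypass the need for explicit resolutions.
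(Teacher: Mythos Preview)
Your proposal is correct and follows essentially the same approach as the paper's proof. Both localize through the proper intermediate fixed locus $M^{\sT_w}$ for each $w = z^a t^b$ with $a \neq 0$, use that $t$ restricts to a nontrivial character on $\sT_w$ to construct a square root of $\iota_w^* K^\vir_M$ there (the paper attributes this observation to \cite{Arbesfeld2021}), and conclude regularity at $w = 1$ from properness of $M^{\sT_w}$.
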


\begin{proof}
  For any $w$ appearing in condition 1, consider the obvious closed
  embeddings
  \[ \iota\colon M^{\bC^\times \times \sT} \hookrightarrow M^{\sT_w} \xrightarrow{\iota_w} M. \]
  Since $M^{\sT_w}$ is proper by hypothesis, so is $M^{\bC^\times
    \times \sT}$, and so
  \[ \chi\big(M, \hat\cO_M^\vir\big) \in \bk_{\bC^\times \times \sT,\loc}[t^{\pm \frac{1}{2}}] \]
  is well-defined. View this as a rational function in $z$ and $t$. By
  the definition \eqref{eq:localized-base-ring} of localized base
  rings, we must show that its denominator has no factors of $1 - w$
  --- equivalently, that the $w = 1$ specialization of $\chi(M,
  \hat\cO^\vir)$ is well-defined.

  We use the following slight modification of a geometric observation
  of \cite[Proposition 3.2]{Arbesfeld2021}. Since $t$ is a
  $\sT$-weight and $\sT \subset \sT_w$, the same argument that shows a
  square root of $\iota^*K_M^\vir$ exists also shows that a square
  root of $\iota_w^*K_M^\vir$ exists. So, by localization with respect
  with a $\bC^\times$ (non-canonical) such that $\sT_w \times
  \bC^\times = \bC^\times \times \sT$,
  \[ \chi(M, \hat\cO_M^\vir) = \chi\left(M^{\sT_w}, \frac{\cO_{M^{\sT_w}}^\vir}{\se(N_{\iota_w}^\vir)} \otimes (\iota_w^*K_M^\vir)^{\frac{1}{2}}\right). \]
  The right hand side is clearly well-defined $\sT_w$-equivariantly,
  i.e. after specializing $w = 1$.
\end{proof}

\subsection{}

\begin{remark}
  The combination of Proposition~\ref{prop:master-space-relation} and
  Lemma~\ref{lem:no-z-poles} works equally well in equivariant
  cohomology, upon replacing the K-theoretic residue map $\Res_z^K$
  with its cohomological analogue $f \mapsto (z^{-1} \text{ term in }
  f_-)$. Working cohomologically, the result is that the term
  $t^{\frac{\ind}{2}} - t^{-\frac{\ind}{2}}$ in
  \eqref{eq:master-space-relation} become $\ind \cdot t$, and the
  resulting quantum integers in the wall-crossing formulas
  \eqref{eq:VW-invariants-relation} and
  \eqref{eq:VW-invariants-relation-simple} become specialized to
  $t=1$.
\end{remark}

\subsection{}

\begin{lemma} \label{lem:master-space-no-z-poles}
  The master space $M_\alpha$ of Definition~\ref{def:master-space}
  satisfies the conditions of Lemma~\ref{lem:no-z-poles}.
\end{lemma}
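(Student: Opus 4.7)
Condition (2) of Lemma~\ref{lem:no-z-poles}, that the obstruction theory on $M_\alpha$ is symmetric of weight $t$, is immediate from the construction in Definition~\ref{def:master-space}: it is a symmetrized pullback along the smooth morphism $\Pi_\alpha$ of the weight-$t$ symmetric obstruction theory on ${}_\kappa \fN_\alpha$, and symmetrized pullback preserves both the symmetry and its weight.

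For condition (1), the plan is to adapt the compactification argument of Lemma~\ref{lem:quiver-stack-fixed-locus-is-proper}. The key preliminary observation is that the $\bC^\times$ with coordinate $z$ acts trivially on $X$ and on the underlying sheaf $E$ (it scales only the framing map $\rho_4$). Hence the $\sT_w$-action on $E$ factors through the projection $\sT_w \hookrightarrow \bC^\times \times \sT \twoheadrightarrow \sT$, which is surjective with finite kernel precisely because $a \neq 0$; consequently any $\sT_w$-fixed master-space point $(E, \vec V, \vec \rho)$ necessarily has $E$ itself $\sT$-fixed.

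Next, replace $X$ with its smooth compactification $\hat X \coloneqq \bP(K_S \oplus \cO_S)$ and form the corresponding compactified master space $\hat M_\alpha$ on $\hat X$. The machinery of \cite[\S 7.4]{Joyce2021} applies to $\tilde Q(k_1,k_2)$ with the stability condition $\tau_{\epsilon,\epsilon,1}$, since the polarization on $\hat X$ is still a rational K\"ahler class, and shows $\hat M_\alpha$ is proper; hence $\hat M_\alpha^{\sT_w}$ is proper as a closed subscheme. The chain of closed immersions
\[ M_\alpha^{\sT_w} \subset \hat M_\alpha^{\sT_w} \subset \hat M_\alpha \]
finishes the argument: the first inclusion is closed because, by the observation above, its image consists of points whose underlying sheaf $E$ is $\sT$-fixed and supported in $X$, which is precisely the locally closed restriction that made the analogous inclusion in Lemma~\ref{lem:quiver-stack-fixed-locus-is-proper} a closed immersion.

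The main obstacle is checking that the compactification/semistability machinery of \cite[\S 7.4]{Joyce2021}, which is invoked for $Q(k)$ in Lemma~\ref{lem:quiver-stack-fixed-locus-is-proper}, extends verbatim to the more elaborate quiver $\tilde Q(k_1,k_2)$ with framing dimension $\vec 1$ and the stability condition $\tau_{\epsilon,\epsilon,1}$. The identification of $\sT_w$-fixed points with $\sT$-fixed underlying sheaves (which is what reduces this to the setting of Lemma~\ref{lem:quiver-stack-fixed-locus-is-proper}) is the one genuinely new conceptual ingredient, but is very short.
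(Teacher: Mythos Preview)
Your proposal is correct and follows essentially the same approach as the paper: for condition (2) you invoke that symmetrized pullback preserves the weight-$t$ symmetry, and for condition (1) you observe that the extra $\bC^\times$ acts trivially on the underlying sheaf $E$, so that $\sT_w$-fixedness forces $E$ to be $\sT$-fixed, and then feed this into the compactification argument of Lemma~\ref{lem:quiver-stack-fixed-locus-is-proper}. The paper's proof says exactly this, only more tersely.

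One remark on your stated ``main obstacle'': there is in fact no obstacle. Lemma~\ref{lem:quiver-stack-fixed-locus-is-proper} is already formulated and proved for an arbitrary quiver $Q(\kappa)$ in the sense of Definition~\ref{def:quiver-framed-stack}, arbitrary dimension vector $\vec d$, and arbitrary stability parameter $\vec\mu$; the master-space quiver $\tilde Q(k_1,k_2)$ with $\vec d = \vec 1$ and $\vec\mu = (\epsilon,\epsilon,1)$ is simply an instance of this, so no extension of the cited machinery is required.
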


\begin{proof}
  The hypothesis $a \neq 0$ guarantees that $\sT \subset \sT_w$. Since
  $M_\alpha$ parameterizes triples $(E, \vec V, \vec \rho)$ and $\sT$
  acts on $E$, triples which are $\sT_w$-fixed must in particular have
  $\sT$-fixed $E$. Thus, the same argument as in
  Lemma~\ref{lem:quiver-stack-fixed-locus-is-proper} shows that
  $M_\alpha^{\sT_w}$ is proper, so condition 1 is satisfied.

  Symmetrized pullback of a symmetric obstruction theory of weight $t$
  is still symmetric of weight $t$, see e.g.
  \eqref{eq:symmetrized-pullback-k-class}, so condition 2 is also
  satisfied.
\end{proof}

\subsection{}

By Lemma~\ref{lem:master-space-no-z-poles}, we may apply
Proposition~\ref{prop:master-space-relation} and
Lemma~\ref{lem:no-z-poles} to $M_\alpha$. In
\S\ref{sec:master-space-fixed-loci-1},
\S\ref{sec:master-space-fixed-loci-2}, and
\S\ref{sec:master-space-fixed-loci-3}, we identify
$M_\alpha^{\bC^\times}$ as a disjoint union of three types of loci;
see \cite[Proposition 9.5]{Joyce2021} for details. All three types of
loci are related in very manageable ways to pairs stacks
$\fN^{Q(k)}_{\alpha,1}$, and \eqref{eq:master-space-relation} will
become a wall-crossing formula relating the refined pairs invariants
$\tilde\VW_\alpha(k_1, t)$ and $\tilde\VW_\alpha(k_2, t)$.

On each fixed locus, we also need to identify the $\bC^\times$-fixed
and non-$\bC^\times$-fixed parts of the restriction of the obstruction
theory on $M_\alpha$. In K-theory, the obstruction theory is given by
\begin{equation} \label{eq:master-space-obstruction-theory}
  \begin{aligned}
    \bT_{M_\alpha}
    &= \Big(\left(\cV_3^\vee \otimes \cV_1 + \cV_3^\vee \otimes \cV_2 - \cV_3^\vee \otimes \cV_3\right) - t^{-1} \otimes (\cdots)^\vee\Big) \\
    &\quad+ \sum_{i=1}^2 \Big(\left(\cV_i^\vee \otimes \cF_{k_i}(\cE) - \cV_i^\vee \otimes \cV_i\right) - t^{-1} \otimes (\cdots)^\vee\Big) - R\cHom_{\pi_X}(\cE, \cE)_\perp
  \end{aligned}
\end{equation}
where $\cE$ is the universal sheaf of $E$ on
$\pi_X\colon \fN \times X \to \fN$, and $\cF_{k_i}(\cE)$ is the
universal bundle of $F_{k_i}(E)$, and each $(\cdots)^\vee$ indicates
the dual of the preceding term. This formula can be obtained by
combining \eqref{eq:symmetrized-pullback-k-class} and
\eqref{eq:quiver-stack-projection}.

For clarity in keeping track of $\bC^\times$-weights on
$\bC^\times$-fixed loci, for components
$Z \subset M_\alpha^{\bC^\times}$, let $\cW_i$ denote the
non-$\bC^\times$-equivariant bundle on $Z$ such that
$\cV_i\big|_Z = z^a \otimes \cW_i$ for some $a \in \bZ$.

\subsection{}
\label{sec:master-space-fixed-loci-1}

Let $Z_{\rho_4=0} \coloneqq \{\rho_4=0\} \subset M_\alpha$. This locus
is obviously $\bC^\times$-fixed. By stability, $\rho_3 \neq 0$ and the
forgetful map
\[ \Pi_{\rho_4=0}\colon Z_{\rho_4=0} \to \fN^{Q(k_1),\sst}_{\alpha,1}, \quad (E, \vec V, \vec \rho) \mapsto (E, V_1, \rho_1) \]
is a $\bP^{\lambda_{k_2}(\alpha)-1}$-bundle, coming from the freedom
to choose the map $\rho_2$.

Now consider the restriction of $\bT_{M_\alpha}$. The non-vanishing
section $\rho_3$ trivializes the line bundle
$\cV_3^\vee \otimes \cV_1$, which then cancels with
$\cV_3^\vee \otimes \cV_3 \cong \cO$ in the first line of
\eqref{eq:master-space-obstruction-theory}. What remains is the
non-$\bC^\times$-fixed part
\begin{equation} \label{eq:master-space-fixed-locus-1-Nvir}
  N^{\vir}_{\iota_{\bC^\times}}\Big|_{Z_{\rho_4=0}} = z \cW_3^\vee \otimes \cW_2 - t^{-1} \otimes (\cdots)^\vee.
\end{equation}
Finally, the entire second line of
\eqref{eq:master-space-obstruction-theory} is $\bC^\times$-fixed. So,
the computation in the proof of
Proposition~\ref{prop:symmetrized-integration-on-projective-bundle}
applies to $\Pi_{\rho_4=0}$, yielding
\begin{equation} \label{eq:master-space-fixed-locus-1}
  \chi\left(Z_{\rho_4=0}, \hat\cO^\vir\right) = [\lambda_{k_2}(\alpha)]_t \cdot \chi\left(\fN_{\alpha,1}^{Q(k_1),\sst}, \hat\cO^\vir\right) = [\lambda_{k_2}(\alpha)]_t \cdot \tilde \VW_\alpha(k_1, t).
\end{equation}

\subsection{}
\label{sec:master-space-fixed-loci-2}

Let $Z_{\rho_3=0} \coloneqq \{\rho_3=0\} \subset M_\alpha$. This locus
is $\bC^\times$-fixed because the scaling of $\rho_4$ can be undone by
making $\bC^\times$ act on $V_3$ with weight $z$. By stability,
$\rho_4 \neq 0$ and the forgetful map
\[ \Pi_{\rho_3=0}\colon Z_{\rho_3=0} \to \fN^{Q(k_2),\sst}_{\alpha,1}, \quad (E, \vec V, \vec \rho) \mapsto (E, V_2, \rho_2) \]
is a $\bP^{\lambda_{k_1}(\alpha)-1}$-bundle, coming from the freedom
to choose the map $\rho_1$.

Now consider the restriction of $\bT_{M_\alpha}$. Like in
\S\ref{sec:master-space-fixed-loci-1} for $Z_{\rho_4=0}$, the
non-$\bC^\times$-fixed part is
\begin{equation} \label{eq:master-space-fixed-locus-2-Nvir}
  N^\vir_{\iota_{\bC^\times}}\Big|_{Z_{\rho_3=0}} = z^{-1} \cW_3^\vee \otimes \cW_1 - t^{-1} \otimes (\cdots)^\vee,
\end{equation}
where the $\bC^\times$ action on $V_3$ creates the non-trivial $z$
weight. The entire second line of
\eqref{eq:master-space-obstruction-theory} is $\bC^\times$-fixed, and
\begin{equation} \label{eq:master-space-fixed-locus-2}
  \chi\left(Z_{\rho_3=0}, \hat\cO^\vir\right) = [\lambda_{k_1}(\alpha)]_t \cdot \chi\left(\fN_{\alpha,1}^{Q(k_2),\sst}, \hat\cO^\vir\right) = [\lambda_{k_1}(\alpha)]_t \cdot \tilde \VW_\alpha(k_2, t).
\end{equation}

\subsection{}
\label{sec:master-space-fixed-loci-3}

Finally, when both $\rho_3,\rho_4 \neq 0$, by stability all
$\rho_i \neq 0$. In order for all maps to be $\bC^\times$-equivariant,
$\bC^\times$ must act on $V_1$, $V_2$, and $V_3$ with weights $z$,
$1$, and $z$ respectively, and $E = zE_1 \oplus E_2$ must split into
weight-$z$ and weight-$1$ pieces \footnote{No extra summands can exist
  in the $\bC^\times$-weight decomposition of $E$, e.g. a weight-$z^2$
  piece, because they would be destabilizing.} such that $\rho_i$
factors as $\rho_i\colon V_i \to F_{k_i}(E_i)$ for $i = 1, 2$.
Semistability of $E$ implies $\tau(E_1) = \tau(E_2)$. Note that while
$\det \bar E = \det(\bar E_1) \det(\bar E_2) = L \in \Pic(S)$ is
pre-specified, $\det(\bar E_1)$ and $\det(\bar E_2)$ themselves are
not. Hence there are fixed loci
\[ Z_{\alpha_1,\alpha_2} \coloneqq \{\det(\bar E_1)\det(\bar E_2) = L\} \subset \fM^{Q(k_1),\sst}_{\alpha_1,1} \times \fM^{Q(k_2),\sst}_{\alpha_2,1} \subset M_\alpha \]
for any non-trivial decomposition $\alpha = \alpha_1 + \alpha_2$ with
$\tau(\alpha_1) = \tau(\alpha_2)$. Put differently, if
$\alpha_i = (r_i, c_{1,i}, c_{2,i})$, fix a splitting
$L_1 \otimes L_2 = L$ where $c_1(L_i) = c_{1,i}$, and then there is a
map $\det_1\colon Z_{\alpha_1,\alpha_2} \to \Pic_0(S)$, given by
$L_1^\vee \otimes \det \bar E_1$, whose fiber at $L_0$ is
\[ \det\nolimits_1^{-1}(L_0) \cong \fN^{Q(k_1),\sst}_{(r_1,L_1 \otimes L_0,c_{2,1}),1} \times \fN^{Q(k_2),\sst}_{(r_2, L_2 \otimes L_0^{-1}, c_{2,2}),1}. \]

Now consider the restriction of $\bT_{M_\alpha}$. The non-vanishing
sections $\rho_3$ and $\rho_4$ trivialize the line bundles
$\cV_3^\vee \otimes \cV_1$ and $\cV_3^\vee \otimes \cV_2$, so the
first line of \eqref{eq:master-space-obstruction-theory} becomes
$\cO - t\cO^\vee$. The non-trivial $\bC^\times$-weight in the
splitting
$\cE\big|_{Z_{\alpha_1,\alpha_2}} = z\cE_1 \oplus \cE_2$
produces the non-$\bC^\times$-fixed part
\begin{equation} \label{eq:master-space-fixed-locus-3-Nvir}
  \begin{aligned}
    N^{\vir}_{\iota_{\bC^\times}}\Big|_{Z_{\alpha_1,\alpha_2}}
    &= \left(z^{-1} \cW_1^\vee \otimes \cF_{k_1}(\cE_2) + z \cW_2^\vee \otimes \cF_{k_2}(\cE_1)\right) - t \otimes (\cdots)^\vee \\
    &\qquad- \left(z^{-1} R\cHom_{\pi_X}(\cE_1, \cE_2) + z R\cHom_{\pi_X}(\cE_2, \cE_1)\right)
  \end{aligned}
\end{equation}
where the subscript $\perp$ can be removed because trace does not see
off-diagonal parts. Recall that Serre duality says
$\chi(\alpha_1, \alpha_2) \coloneqq \rank R\cHom_{\pi_X}(\cE_1, \cE_2)$
is anti-symmetric. The remaining $\bC^\times$-fixed part of the second
line of \eqref{eq:master-space-obstruction-theory}, combined with the
$\cO - t\cO^\vee$ from the first line, gives the total
$\bC^\times$-fixed part
\begin{align}
  &\sum_{i=1}^2 \Big(\left(\cW_i^\vee \otimes \cF_{k_i}(\cE_i) - \cW_i^\vee \otimes \cW_i^\vee\right) - t \otimes (\cdots)^\vee\Big) \nonumber \\
  &- \Big(R\cHom_{\pi_X}(\cE_1, \cE_1) + R\cHom_{\pi_X}(\cE_2, \cE_2)\Big)_\perp + \cO - t\cO^\vee. \label{eq:split-fixed-loci-fixed-Ext-term}
\end{align}
Non-canonically, the second line is
$-R\cHom_{\pi_X}(\cE_1, \cE_1)_\perp - (R\cHom_{\pi_X}(\cE_2, \cE_2) - \cO + t\cO^\vee)$.

\subsection{}
\label{sec:mixed-fixed-locus-cases}

There are now three cases for the computation of
$\chi(Z_{\alpha_1,\alpha_2}, \hat\cO^\vir)$.

If $H^1(\cO_S) = H^2(\cO_S) = 0$, then from
\eqref{eq:VW-reduced-obstruction-theory},
$R\cHom_{\pi_X}(\cE_2, \cE_2)_\perp = R\cHom_{\pi_X}(\cE_2, \cE_2) - \cO + t\cO^\vee$,
and so the $\bC^\times$-fixed part
\eqref{eq:split-fixed-loci-fixed-Ext-term} becomes, canonically,
\[ -R\cHom_{\pi_X}(\cE_1, \cE_1)_\perp - R\cHom_{\pi_X}(\cE_2, \cE_2)_\perp. \]
The base $\Pic_0(S)$ of the map $\det_1$ is trivial. Hence
\begin{equation} \label{eq:master-space-fixed-locus-3}
  \chi\left(Z_{\alpha_1,\alpha_2}, \hat\cO^\vir\right) = \chi\left(\fN^{Q(k_1),\sst}_{\alpha_1,1} \times \fN^{Q(k_2),\sst}_{\alpha_2,1}, \hat\cO^\vir \boxtimes \hat\cO^\vir\right) = \tilde \VW_{\alpha_1}(k_1, t) \tilde \VW_{\alpha_2}(k_2, t).
\end{equation}

If $H^2(\cO_S) \neq 0$, then the obstruction sheaf in
\eqref{eq:split-fixed-loci-fixed-Ext-term} has the extra trivial
summands $R^2\pi_{S*}\cO$ from
$R\cHom_{\pi_S}(\bar \cE, \bar \cE) = R\cHom_{\pi_S}(\bar \cE, \bar \cE)_0 \oplus R^2\pi_{S*}\cO \oplus \cdots$,
where $\bar \cE$ is the universal sheaf of $\bar E$ on
$\pi_S\colon\fN \times S \to \fN$. Hence
$\hat\cO^\vir_{Z_{\alpha_1,\alpha_2}} = 0$.

Finally, if $H^1(\cO_S) \neq 0$, then the base $\Pic_0(S)$ is
non-trivial. Following the vanishing argument in
\S\ref{sec:U-pairs-invariants}, $\Pic_0(S)$ acts with no fixed points
on the moduli stack
\[ \{\det(\bar E_1) \det(\bar E_2) = L\} \subset \fM_{\alpha_1} \times \fM_{\alpha_2}, \]
giving a $\dim H^1(\cO_S)$-dimensional space of nowhere-vanishing
``$t$-twisted cosections'' of the obstruction sheaf in
\eqref{eq:split-fixed-loci-fixed-Ext-term}.
Lemma~\ref{lem:symmetrized-pullback-cosection} lifts this to
nowhere-vanishing ``$t$-twisted cosections'' of the obstruction sheaf
on $Z_{\alpha_1,\alpha_2}$, so $\hat\cO^\vir_{Z_{\alpha_1,\alpha_2}}
\equiv 0 \bmod{(1 - t)^{\dim H^1(\cO_S)}}$ by cosection localization
\cite{Kiem2013a}.

\subsection{}

We are ready to put everything together. Plugging
\eqref{eq:master-space-fixed-locus-1},
\eqref{eq:master-space-fixed-locus-2}, and
\eqref{eq:master-space-fixed-locus-3} into
\eqref{eq:master-space-relation}, if $H^1(\cO_S) = H^2(\cO_S) = 0$
then, after dividing by an overall factor of
$t^{\frac{1}{2}} - t^{-\frac{1}{2}}$,
\begin{equation} \label{eq:VW-invariants-relation}
  \begin{aligned}
    0 &= [\lambda_{k_2}(\alpha)]_t \tilde \VW_\alpha(k_1,t) - [\lambda_{k_1}(\alpha)]_t \tilde \VW_\alpha(k_2, t) \\
    &\quad + \sum_{\substack{\alpha_1+\alpha_2=\alpha\\\forall j: \, \tau(\alpha_j)=\tau(\alpha)}} [\lambda_{k_2}(\alpha_1) - \lambda_{k_1}(\alpha_2) + \chi(\alpha_1, \alpha_2)]_t \tilde \VW_{\alpha_1}(k_1,t) \tilde\VW_{\alpha_2}(k_2,t).
  \end{aligned}
\end{equation}
Note that $\ind = -1$ on $Z_{\rho_4=0}$ by
\eqref{eq:master-space-fixed-locus-1-Nvir}, $\ind = +1$ on
$Z_{\rho_3=0}$ by \eqref{eq:master-space-fixed-locus-2-Nvir}, and
$\ind = \lambda_{k_2}(\alpha_1) - \lambda_{k_1}(\alpha_2) +
\chi(\alpha_1,\alpha_2)$ on $Z_{\alpha_1,\alpha_2}$ by
\eqref{eq:master-space-fixed-locus-3-Nvir}. Also, recall that
$\tau(\alpha_1) = \tau(\alpha_2)$ if and only if $\tau(\alpha_1) =
\tau(\alpha) = \tau(\alpha_2)$; this is a general property of
stability conditions.

Otherwise, if $H^2(\cO_S) \neq 0$, by the second case in
\S\ref{sec:mixed-fixed-locus-cases}, there is no contribution from the
sum and only the first two terms remain:
\begin{equation} \label{eq:VW-invariants-relation-simple}
  0 = [\lambda_{k_2}(\alpha)]_t \tilde \VW_\alpha(k_1,t) - [\lambda_{k_1}(\alpha)]_t \tilde \VW_\alpha(k_2, t).
\end{equation}
Finally, if $H^1(\cO_S) \neq 0$, by the third case in
\S\ref{sec:mixed-fixed-locus-cases}, the same is true modulo $(1 -
t)^{\dim H^1(\cO_S)}$.

If one further assumes that $\cO_S(1)$ is generic for $\alpha$ in the
sense of \eqref{eq:generic-polarization}, then in the sum in
\eqref{eq:VW-invariants-relation}, $\alpha_2 = c \alpha_1$ for some
constant $c$, and therefore
\[ \chi(\alpha_1, \alpha_2) = c \chi(\alpha_1, \alpha_1) = 0 \]
by anti-symmetry of $\chi$, so $\chi(\alpha_1, \alpha_2)$ does not
contribute in the quantum integer. This assumption can be used to
simplify the combinatorics in \S\ref{sec:semistable-invariants} but is
ultimately unnecessary.

\section{Defining semistable invariants}
\label{sec:semistable-invariants}

\subsection{}

Using \eqref{eq:VW-invariants-relation-simple}, the proof of cases 2
and 3 of the main Theorem~\ref{thm:VW-invars} is clear: since
$\lambda_k(\alpha) = \chi(\alpha(k))$ is positive,
$[\lambda_k(\alpha)]_t$ is invertible and $\tilde\VW_\alpha(k,
t)/[\lambda_k(\alpha)]_t$ is independent of $k$. It remains to use the
more complicated relation \eqref{eq:VW-invariants-relation} to prove
case 1. This is a combinatorial result which we prove in this section
as Corollary~\ref{cor:VW-semistable-invariants}. (See also
Remark~\ref{rem:artificial-lie-algebra}, which motivates
Definition~\ref{def:artificial-lie-algebra}.)

\subsection{}

\begin{definition} \label{def:artificial-lie-algebra}
  Let $(A, +)$ be a commutative monoid, and let
  \[ \scA \coloneqq \bigoplus_{(k,\alpha) \in \bZ \times A} \scA_{k,\alpha} \coloneqq \bQ(t^{\frac{1}{2}}) \left[\{\tilde \sZ_{k,0}^\pm\}_k \cup \{\tilde \sZ_{k,\alpha}\}_{k,\alpha}\right] \]
  be the $(\bZ \times A)$-graded $\bQ(t^{\frac{1}{2}})$-algebra where
  $\tilde \sZ_{k,\alpha}$ has degree $(k, \alpha) \in \bZ \times A$,
  and $\scA_{k,\alpha} \subset \scA$ is the $(k,\alpha)$-weight part.
  Given a set map $\tilde\chi\colon (\bZ \times A)^2 \to \bZ$, define
  \begin{align*}
    [-, -]\colon \scA_{k_1,\alpha_1} \times \scA_{k_2,\alpha_2} &\to \scA_{k_1+k_2,\alpha_1+\alpha_2} \\
    (x, y) &\mapsto \left[\tilde\chi\left((k_1,\alpha_1),(k_2,\alpha_2)\right)\right]_t \cdot xy
  \end{align*}
  and extend it to $[-, -]\colon \scA \times \scA \to \scA$
  bilinearly.
\end{definition}

\subsection{}

\begin{lemma}
  If $\tilde\chi$ is bilinear and anti-symmetric, then $[-, -]$ is a
  Lie bracket.
\end{lemma}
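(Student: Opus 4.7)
The plan is to reduce to homogeneous elements and then verify the three axioms (bilinearity, antisymmetry, Jacobi identity) of a Lie bracket directly. Bilinearity is automatic from the definition of $[-,-]$, so it suffices to verify antisymmetry and Jacobi on monomials $x \in \scA_{k_1,\alpha_1}$, $y \in \scA_{k_2,\alpha_2}$, $z \in \scA_{k_3,\alpha_3}$, writing $a \coloneqq \tilde\chi((k_1,\alpha_1),(k_2,\alpha_2))$, $b \coloneqq \tilde\chi((k_2,\alpha_2),(k_3,\alpha_3))$, $c \coloneqq \tilde\chi((k_1,\alpha_1),(k_3,\alpha_3))$. Since $\scA$ is a commutative polynomial algebra, the products $xy$, $yzx$, etc. all equal $xyz$, so everything reduces to identities among $[n]_t$.

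First, I would record the elementary identity $[-n]_t = -[n]_t$, which follows directly from \eqref{eq:quantum-integer} because $(-1)^{-n-1} = (-1)^{n-1}$ and $t^{-n/2} - t^{n/2} = -(t^{n/2} - t^{-n/2})$. Antisymmetry of $[-,-]$ then follows immediately: by antisymmetry of $\tilde\chi$,
\[ [y, x] = [-a]_t \cdot yx = -[a]_t \cdot xy = -[x, y]. \]

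For the Jacobi identity, expand each term using bilinearity of $\tilde\chi$ (so e.g.\ $\tilde\chi((k_1+k_2,\alpha_1+\alpha_2),(k_3,\alpha_3)) = c + b$) and the identity $[-n]_t = -[n]_t$ derived above:
\begin{align*}
  [[x,y],z] &= [a]_t [b+c]_t \cdot xyz, \\
  [[y,z],x] &= [b]_t [-a-c]_t \cdot xyz = -[b]_t[a+c]_t \cdot xyz, \\
  [[z,x],y] &= [-c]_t[a-b]_t \cdot xyz = -[c]_t[a-b]_t \cdot xyz.
\end{align*}
So the Jacobi identity reduces to the purely combinatorial quantum integer identity
\begin{equation*}
  [a]_t [b+c]_t = [b]_t [a+c]_t + [c]_t [a-b]_t \qquad \forall\, a,b,c \in \bZ.
\end{equation*}

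The main (and only) computational step is this identity. I would prove it by writing $q \coloneqq t^{1/2}$ and absorbing the sign $(-1)^{n-1}$ into the analysis: one checks that $(-1)^{a-1}(-1)^{(b+c)-1} = (-1)^{a+b+c}$ and similarly for the other two products, so all three terms carry the common sign $(-1)^{a+b+c}$ and it is enough to verify the unsigned identity $\hat{[a]}\hat{[b+c]} = \hat{[b]}\hat{[a+c]} + \hat{[c]}\hat{[a-b]}$ where $\hat{[n]} \coloneqq (q^n - q^{-n})/(q - q^{-1})$. Multiplying through by $(q-q^{-1})^2$ and expanding each product $(q^x - q^{-x})(q^y - q^{-y})$ as a sum of four monomials in $q$, a routine bookkeeping shows that all eight monomials on the right-hand side cancel against the four on the left in pairs (for instance, $q^{a+b+c}$ and $q^{-a-b-c}$ appear on both sides; the leftover $q^{\pm(a-b-c)}$ on the left cancel against $\pm q^{a-b-c}$ arising from $\hat{[c]}\hat{[a-b]}$, and the remaining four exponents $\pm(a+c-b), \pm(b+c-a)$ pair up across the two terms on the right). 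This is essentially the Ptolemy identity for quantum integers, and I expect it to be the only non-routine step; the rest of the proof is formal.
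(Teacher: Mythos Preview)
Your proof is correct and follows essentially the same route as the paper's: reduce to homogeneous elements, observe that antisymmetry of $\tilde\chi$ together with $[-n]_t=-[n]_t$ gives antisymmetry of the bracket, and then reduce the Jacobi identity to a single quantum-integer identity in three integer parameters. The only cosmetic difference is that the paper verifies the resulting identity \eqref{eq:q-integer-jacobi-identity} by invoking the quantum-integer addition formula $[a+b]_t = (-1)^a t^{\pm a/2}[b]_t + (-1)^b t^{\mp b/2}[a]_t$, whereas you expand directly into monomials in $q=t^{1/2}$; both are equivalent one-line computations.
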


\begin{proof}
  Anti-symmetry of $\tilde\chi$ makes $[-, -]$ anti-symmetric. It remains to
  verify the Jacobi identity
  \[ [x_1, [x_2, x_3]] + [x_2, [x_3, x_1]] + [x_3, [x_1, x_2]] = 0 \]
  for $x_i \in \scA_{k_i,\alpha_i}$. Set
  $\tilde\chi_{ij} \coloneqq \tilde\chi((k_i, \alpha_i), (k_j,\alpha_j))$,
  and let
  $(a,b,c) \coloneqq (\tilde\chi_{12}, \tilde\chi_{13}, \tilde\chi_{23})$
  for short. Then we must prove
  \begin{equation} \label{eq:q-integer-jacobi-identity}
    [\tilde\chi_{12} + \tilde\chi_{13}]_t [\tilde\chi_{23}]_t - [\tilde\chi_{23} - \tilde\chi_{12}]_t [\tilde\chi_{13}]_t - [\tilde\chi_{13} + \tilde\chi_{23}]_t [\tilde\chi_{12}]_t = 0.
  \end{equation}
  This is true for arbitrary integers $a, b, c \in \bZ$, not just $(a,
  b, c) = (\tilde\chi_{12}, \tilde\chi_{13}, \tilde\chi_{23})$, using
  the quantum integer addition formula $[a+b]_t = (-1)^a t^{\pm
    \frac{a}{2}} [b]_t + (-1)^b t^{\mp \frac{b}{2}} [a]_t$ to expand
  all terms. This addition formula arises from $t^{\pm \frac{a+b}{2}}
  - t^{\mp \frac{a+b}{2}} = t^{\pm \frac{a}{2}} (t^{\pm \frac{b}{2}} -
  t^{\mp \frac{b}{2}}) + t^{\mp \frac{b}{2}} (t^{\pm \frac{a}{2}} -
  t^{\mp \frac{a}{2}})$ by multiplying both sides by $\pm
  (-1)^{a+b-1}/(t^{\frac{1}{2}} - t^{-\frac{1}{2}})$.
\end{proof}

\subsection{}

\begin{proposition} \label{prop:semistable-invariants}
  Fix $k_1, k_2 \in \bZ$. Suppose $\tilde\chi$ is bilinear and
  anti-symmetric, and
  \[ \tilde\chi\left((k_1, 0), (k_2, 0)\right) = 0, \quad \tilde\chi\left((0,\alpha),(k_i,0)\right) \neq 0 \]
  for all $0 \neq \alpha \in A$ and $i = 1, 2$. Moreover, suppose
  there exists a homomorphism $\rank\colon A \to \bZ_{\ge 0}$ such
  that $\rank \alpha > 0$ for $0 \neq \alpha \in A$. Then, in the
  quotient of $\scA$ by
  \begin{equation} \label{eq:invariants-relation}
    \left[\tilde \sZ_{k_1,\alpha}, \tilde \sZ_{k_2,0}\right] + \left[\tilde \sZ_{k_1,0}, \tilde \sZ_{k_2,\alpha}\right] + \sum_{\substack{\alpha_1+\alpha_2=\alpha\\\forall j: \, \tau(\alpha_j)=\tau(\alpha)}} \left[\tilde \sZ_{k_1,\alpha_1}, \tilde \sZ_{k_2,\alpha_2}\right] = 0, \qquad \forall 0 \neq \alpha \in A,
  \end{equation}
  where $\tau$ is any function on $A$, the elements
  $\{\sZ^{(i)}_\alpha\}_{\alpha \neq 0} \in \scA_{0,\alpha}$ uniquely
  defined by the formulas
  \begin{equation} \label{eq:semistable-invariants}
    \tilde \sZ_{k_i,\alpha} = \sum_{\substack{n>0\\\alpha_1 + \cdots + \alpha_n = \alpha\\\forall j: \tau(\alpha_j)=\tau(\alpha)}} \frac{1}{n!} \left[\sZ^{(i)}_{\alpha_n}, \left[\cdots \left[ \sZ^{(i)}_{\alpha_2}, [\sZ^{(i)}_{\alpha_1}, \tilde \sZ_{k_i,0}]\right] \cdots \right]\right]
  \end{equation}
  are independent of $i$, namely $\sZ^{(1)}_\alpha = \sZ^{(2)}_\alpha$
  for all $0 \neq \alpha \in A$.
\end{proposition}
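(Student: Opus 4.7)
The plan is to repackage the data of Proposition~\ref{prop:semistable-invariants} in Lie-theoretic form and exploit the fact that $\exp(\ad \Psi)$ is a Lie algebra automorphism. Slopes decouple (only classes of the same $\tau$-slope as $\alpha$ appear in \eqref{eq:semistable-invariants} and \eqref{eq:invariants-relation}), so I would fix a slope $\tau$ throughout and form the ``pair generating function'' and ``BPS generating function''
\[ \tilde W_{k_i} \coloneqq \sum_{\alpha:\,\tau(\alpha) = \tau} \tilde\sZ_{k_i, \alpha}, \qquad \Psi^{(i)} \coloneqq \sum_{\alpha \neq 0,\,\tau(\alpha) = \tau} \sZ^{(i)}_\alpha. \]
Summing \eqref{eq:invariants-relation} over $\alpha$ of slope $\tau$ (and using $[\tilde\sZ_{k_1,0}, \tilde\sZ_{k_2,0}] = 0$) reorganizes into the single identity $[\tilde W_{k_1}, \tilde W_{k_2}] = 0$; summing \eqref{eq:semistable-invariants} similarly gives $\tilde W_{k_i} = \exp(\ad \Psi^{(i)}) \tilde\sZ_{k_i, 0}$. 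The key \emph{consistency observation} is that if $\Psi^{(1)} = \Psi^{(2)} = \Psi$, then
\[ [\tilde W_{k_1}, \tilde W_{k_2}] = [\exp(\ad\Psi)\tilde\sZ_{k_1,0}, \exp(\ad\Psi)\tilde\sZ_{k_2,0}] = \exp(\ad\Psi)\,[\tilde\sZ_{k_1,0}, \tilde\sZ_{k_2,0}] = 0 \]
holds automatically, because $\exp(\ad\Psi)$ is a Lie automorphism. Hence equality of the $\sZ^{(i)}_\alpha$ is compatible with \eqref{eq:invariants-relation}; the goal is to show it is forced.

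To show uniqueness, I would induct on $\alpha$ in a well-founded order (e.g. the maximal length of a slope-$\tau$ decomposition of $\alpha$, which is finite for Gieseker-style stability conditions). For minimal $\alpha$, \eqref{eq:invariants-relation} reduces to $[\tilde\sZ_{k_1,\alpha}, \tilde\sZ_{k_2,0}] + [\tilde\sZ_{k_1,0}, \tilde\sZ_{k_2,\alpha}] = 0$ and \eqref{eq:semistable-invariants} to $\tilde\sZ_{k_i,\alpha} = [\sZ^{(i)}_\alpha, \tilde\sZ_{k_i,0}]$; direct substitution using bilinearity/antisymmetry of $\tilde\chi$, the hypothesis $\tilde\chi((k_1,0),(k_2,0)) = 0$, and the identity $[-n]_t = -[n]_t$ collapses the relation to
\[ [c_1(\alpha)]_t[c_2(\alpha)]_t\,(\sZ^{(1)}_\alpha - \sZ^{(2)}_\alpha)\,\tilde\sZ_{k_1,0}\tilde\sZ_{k_2,0} = 0, \qquad c_i(\alpha) \coloneqq \tilde\chi((0,\alpha),(k_i,0)), \]
forcing $\sZ^{(1)}_\alpha = \sZ^{(2)}_\alpha$. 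For the inductive step, substituting \eqref{eq:semistable-invariants} into \eqref{eq:invariants-relation} expresses the latter as an element $\tilde F_\alpha$ of an auxiliary polynomial ring in formal variables $\sZ^{(i)}_\beta$ (for $\beta \leq \alpha$) and $\tilde\sZ_{k_i,0}^\pm$. By the consistency observation, $\tilde F_\alpha$ vanishes identically on the diagonal $\sZ^{(1)} = \sZ^{(2)}$, and hence lies in the ideal generated by the differences $\Delta_\beta \coloneqq \sZ^{(1)}_\beta - \sZ^{(2)}_\beta$. A crucial degree-counting observation, using that $A$ has no negatives as a monoid, is that the cofactor of $\Delta_\alpha$ is forced by $\bZ \times A$-grading to lie in the $(k_1+k_2, 0)$-graded piece, which contains only monomials in $\tilde\sZ_{k_i,0}^\pm$; similarly higher powers of $\Delta_\alpha$ cannot appear. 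Hence $\tilde F_\alpha$ modulo the ideal $\{\Delta_\beta\}_{\beta < \alpha}$ equals $\Delta_\alpha \cdot [c_1(\alpha)]_t[c_2(\alpha)]_t \tilde\sZ_{k_1,0}\tilde\sZ_{k_2,0}$, exactly as in the base case. Applying the inductive hypothesis in $\scA$, which is a polynomial ring (hence a domain), yields $\sZ^{(1)}_\alpha = \sZ^{(2)}_\alpha$.

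The main obstacle is ensuring that the quantum integers $[c_i(\alpha)]_t$ do not vanish; invertibility of $\tilde\sZ_{k_i, 0}$ in $\scA$ is built into the construction. This nonvanishing is tacitly required by the hypothesis that $\sZ^{(i)}_\alpha$ is \emph{uniquely} determined by \eqref{eq:semistable-invariants}, since the triangular inversion extracts $\sZ^{(i)}_\alpha$ from the $n=1$ leading coefficient $[c_i(\alpha)]_t\,\sZ^{(i)}_\alpha\,\tilde\sZ_{k_i,0}$. In the geometric application to Theorem~\ref{thm:VW-invars}, $c_i(\alpha) = \chi(\alpha(k_i))$ is positive for $k_i \gg 0$, so the issue does not arise.
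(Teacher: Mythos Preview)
Your proposal is correct and follows essentially the same approach as the paper: both induct on $\alpha$, isolate the leading $\Delta_\alpha$ term with coefficient $[c_1(\alpha)]_t[c_2(\alpha)]_t\,\tilde\sZ_{k_1,0}\tilde\sZ_{k_2,0}$, and kill the remainder using the fact that $\exp(\ad\Psi)$ intertwines the Lie bracket. The only difference is packaging --- you phrase the vanishing as ``$\tilde F_\alpha$ lies in the diagonal ideal'' in an auxiliary polynomial ring, while the paper writes the remainder explicitly as $\sum C_{\alpha_1,\ldots,\alpha_n}$ and identifies it as a weight piece of $[e^{\ad_z}x,e^{\ad_z}y]=e^z[x,y]e^{-z}=0$ in the completed universal enveloping algebra $\hat U(\scA)$.
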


For instance, if $\rank \alpha = 1$, then $\alpha$ is indecomposable
and the right hand side of \eqref{eq:semistable-invariants} consists
of only the $n=1$ term $[\sZ^{(i)}_\alpha, \tilde \sZ_{k_i,0}]$. Since
$\tilde \sZ_{k_i,0}$ is invertible, $\sZ^{(i)}_\alpha \propto
\tilde\sZ_{k_i,\alpha} \tilde \sZ^{-1}_{k_i,0} \in \scA_{0,\alpha}$,
and then one can determine the scalar by the definition of the Lie
bracket:
\[ \tilde \sZ_{k_i,\alpha} = [\sZ^{(i)}_\alpha, \tilde \sZ_{k_i,0}] = [\tilde\chi((0,\alpha), (k_i,0))]_t \cdot \sZ^{(i)}_\alpha \tilde \sZ_{k_i,0}. \]
This uniquely defines $\sZ^{(i)}_\alpha$, and the relation
\eqref{eq:invariants-relation} immediately implies
$\sZ^{(1)}_\alpha = \sZ^{(2)}_\alpha$.

\subsection{}

\begin{corollary} \label{cor:VW-semistable-invariants}
  Given elements
  $\{\tilde \VW_\alpha(k_1, t), \tilde \VW_\alpha(k_2, t)\}_\alpha \subset \bQ(t^{\frac{1}{2}})$
  satisfying \eqref{eq:VW-invariants-relation},
  \begin{equation} \label{eq:VW-semistable-invariants}
    \tilde \VW_{\alpha}(k_i, t) \eqqcolon \sum_{\substack{n>0\\\alpha_1 + \cdots + \alpha_n = \alpha\\\forall j: \tau(\alpha_j) = \tau(\alpha)}} \frac{1}{n!} \prod_{j=1}^n \Big[\lambda_{k_i}(\alpha_j) - \chi\Big(\sum_{k=1}^{j-1} \alpha_k, \alpha_j\Big)\Big]_t \VW_{\alpha_j}(k_i, t)
  \end{equation}
  uniquely defines elements
  $\{\VW_\alpha(k_i, t)\}_{\alpha} \subset \bQ(t^{\frac{1}{2}})$
  which are independent of $i = 1, 2$.
\end{corollary}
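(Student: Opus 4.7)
The plan is to apply Proposition~\ref{prop:semistable-invariants} with the commutative monoid $A$ being the Chern classes $\alpha$ of fixed Gieseker slope, the identifications $\tilde\sZ_{k_i,\alpha} \leftrightarrow \tilde\VW_\alpha(k_i,t)$ for $\alpha \neq 0$ and $\tilde\sZ_{k_i,0} \leftrightarrow 1$, and the antisymmetric pairing
\[ \tilde\chi((k_1,\alpha_1),(k_2,\alpha_2)) = \lambda_{k_2}(\alpha_1) - \lambda_{k_1}(\alpha_2) + \chi(\alpha_1,\alpha_2). \]
A direct coefficient match shows that the proposition's relation \eqref{eq:invariants-relation} specializes to the wall-crossing formula \eqref{eq:VW-invariants-relation} under these identifications (using $[-n]_t = -[n]_t$).

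Next I would verify the hypotheses of the proposition. Antisymmetry of $\tilde\chi$ follows from the Serre-duality antisymmetry of $\chi$ on the Calabi--Yau 3-fold $X$, and $\tilde\chi((k_1,0),(k_2,0)) = 0$ is immediate. The subtle hypothesis is bilinearity on $(\bZ \times A)^2$: the formula above is polynomial, not linear, in $k_i$. I would resolve this by regrading through the extended lattice $\Lambda \coloneqq A \oplus \bZ\,e_1 \oplus \bZ\,e_2$, assigning $\tilde\sZ_{k_i,\alpha}$ the class $\alpha - e_i \in \Lambda$ (formally encoding $\alpha - [\cO_X(-k_i)]$ in K-theory), and extending $\chi$ bilinearly by $\tilde\chi(e_i,\alpha) \coloneqq \lambda_{k_i}(\alpha)$ and $\tilde\chi(e_i,e_j) \coloneqq 0$. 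Then $\tilde\chi$ is genuinely bilinear and antisymmetric on $\Lambda$ by construction, with specialization at $(\alpha_1 - e_1, \alpha_2 - e_2)$ recovering the desired formula.

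Granting the proposition, one extracts elements $\sZ^{(i)}_\alpha$ independent of $i$ satisfying the iterated-bracket recursion \eqref{eq:semistable-invariants}. To finish, I would unwind \eqref{eq:semistable-invariants} and match it with \eqref{eq:VW-semistable-invariants}: at the $j$-th level of the nested bracket, the accumulated element has grading $(\alpha_1 + \cdots + \alpha_{j-1}) - e_i$, and bracketing with $\sZ^{(i)}_{\alpha_j}$ (grading $\alpha_j$) contributes the scalar
\[ \bigl[\tilde\chi(\alpha_j, (\alpha_1 + \cdots + \alpha_{j-1}) - e_i)\bigr]_t = \Bigl[\lambda_{k_i}(\alpha_j) - \chi\Bigl(\sum_{k<j}\alpha_k, \alpha_j\Bigr)\Bigr]_t \]
(using antisymmetry of $\chi$), which is exactly the prefactor in \eqref{eq:VW-semistable-invariants}. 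Setting $\VW_\alpha(k_i,t) \coloneqq \sZ^{(i)}_\alpha$ then concludes.

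The main obstacle is the extended-lattice reinterpretation needed to make $\tilde\chi$ genuinely bilinear, since otherwise the proposition's hypotheses fail on the na\"ive identification (and in particular the extra $-\lambda_0(\alpha_1 + \cdots + \alpha_{j-1})$ term that would appear in the na\"ive expansion must be absorbed by the framing basis vector $e_i$). Everything after that is a routine unwinding of the iterated brackets.
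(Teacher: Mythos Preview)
Your approach is correct and is essentially the paper's: take $\tilde\chi$ to be the displayed formula $\lambda_{k_2}(\alpha_1) - \lambda_{k_1}(\alpha_2) + \chi(\alpha_1,\alpha_2)$, specialize $\tilde\sZ_{k,\alpha} \mapsto \tilde\VW_\alpha(k,t)$ for $\alpha \neq 0$ and $\tilde\sZ_{k,0} \mapsto 1$, and then match \eqref{eq:invariants-relation} with \eqref{eq:VW-invariants-relation} and \eqref{eq:semistable-invariants} with \eqref{eq:VW-semistable-invariants}. Your extended-lattice regrading via $\Lambda = A \oplus \bZ e_1 \oplus \bZ e_2$ makes rigorous a point the paper leaves tacit --- the paper simply writes down $\tilde\chi$ and invokes Proposition~\ref{prop:semistable-invariants} without addressing bilinearity in the $k$-direction --- and your fix (replacing the $\bZ$-grading by formal framing classes $e_i$ so that $\tilde\chi$ is honestly bilinear) is exactly the right repair and does not alter the argument.
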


\begin{proof}
  Take $A$ to be the monoid of effective classes \eqref{eq:classes}.
  For the Lie bracket, take the quantity
  \[ \tilde\chi\left((k_1,\alpha_1), (k_2,\alpha_2)\right) \coloneqq \lambda_{k_2}(\alpha_1) - \lambda_{k_1}(\alpha_2) + \chi(\alpha_1,\alpha_2). \]
  which appears in \eqref{eq:VW-invariants-relation}. The function
  $\tau$ is the Gieseker stability function, and the rank homomorphism
  is just the rank, i.e. the $H^0(S)$ component, of the class
  $\alpha$. The specialization
  \[ \tilde \sZ_{k,\alpha} \mapsto \begin{cases} \tilde \VW_\alpha(k, t) & \alpha \neq 0 \\ 1 & \alpha = 0 \end{cases} \]
  is valid because the required relation
  \eqref{eq:invariants-relation} becomes the wall-crossing formula
  \eqref{eq:VW-invariants-relation}. Then
  \eqref{eq:semistable-invariants} becomes
  \eqref{eq:VW-semistable-invariants}, as desired.
\end{proof}

\subsection{}

\begin{proof}[Proof of Proposition~\ref{prop:semistable-invariants}.]
  We will induct on $\rank \alpha$ to show that $\sZ_\alpha^{(i)} \in
  \scA_{0,\alpha}$ is uniquely defined, and that $\sZ_\alpha^{(1)} =
  \sZ_\alpha^{(2)}$. By the induction hypothesis, all the
  $\sZ_{\alpha_j}^{(i)}$ in the $n>1$ terms of
  \eqref{eq:semistable-invariants} may be replaced by
  $\sZ_{\alpha_j}^{(1)}$:
  \begin{equation} \label{eq:semistable-invariants-inductive}
    \tilde \sZ_{k_i,\alpha} = [\sZ_{\alpha}^{(i)}, \tilde \sZ_{k_i,0}] + \sum_{\substack{n>1\\\alpha_1 + \cdots + \alpha_n = \alpha\\\forall j: \tau(\alpha_j)=\tau(\alpha)}} \frac{1}{n!} \left[\sZ^{(1)}_{\alpha_n}, \left[\cdots \left[ \sZ^{(1)}_{\alpha_2}, [\sZ^{(1)}_{\alpha_1}, \tilde \sZ_{k_i,0}]\right] \cdots \right]\right],
  \end{equation}
  and the sum lives in $\scA_{k_i,\alpha}$. Then $[\sZ^{(i)}_\alpha,
    \tilde \sZ_{k_i,0}] \propto \sZ_\alpha^{(i)} \tilde \sZ_{k_i,0}
  \in \scA_{k_i,\alpha}$ as well. Since $\tilde \sZ_{k_i,0}$ is
  invertible, this uniquely defines $\sZ^{(i)}_\alpha \in
  \scA_{0,\alpha}$. To show $\sZ_\alpha^{(1)} = \sZ_\alpha^{(2)}$,
  plug \eqref{eq:semistable-invariants-inductive} into
  \eqref{eq:invariants-relation} and use the hypothesis on
  $\tilde\chi$ to get
  \begin{equation} \label{eq:semistable-invariants-comparison}
    \left[\tilde\chi\left((0,\alpha),(k_1,0)\right)\right]_t \left[\tilde\chi\left((0,\alpha),(k_2,0)\right)\right]_t \left(\sZ_\alpha^{(1)} - \sZ_\alpha^{(2)}\right)\tilde\sZ_{k_1,0} \tilde\sZ_{k_2,0} + \sum_{\substack{n>1\\\alpha_1+\cdots+\alpha_n=\alpha\\\forall j:\tau(\alpha_j)=\tau(\alpha)}} \!\!C_{\alpha_1,\ldots,\alpha_n} = 0
  \end{equation}
  where the first two terms are the $n=1$ case of the sum
  \eqref{eq:semistable-invariants-inductive} and, with the
  abbreviations $z_\beta \coloneqq \sZ_\beta^{(1)}$ and
  $x \coloneqq \tilde\sZ_{k_1,0}$ and
  $y \coloneqq \tilde\sZ_{k_2,0}$,
  \[ C_{\alpha_1,\ldots,\alpha_n} \coloneqq \sum_{m=0}^n \bigg[\frac{1}{m!} \Big[z_{\alpha_m}, \big[\cdots[z_{\alpha_2}, [z_{\alpha_1}, x]]\cdots\big]\Big], \frac{1}{(n-m)!} \Big[z_{\alpha_n}, \big[\cdots[z_{\alpha_{m+2}}, [z_{\alpha_{m+1}}, y]]\cdots\big]\Big]\bigg]. \]
  To complete the inductive step, it remains to show that the sum in
  \eqref{eq:semistable-invariants-comparison} vanishes. Namely, since
  the coefficient of $\sZ_\alpha^{(1)} - \sZ_\alpha^{(2)}$ is
  invertible by hypothesis, we can then conclude that
  $\sZ_\alpha^{(1)} = \sZ_\alpha^{(2)}$.

  Let $U(\scA) = \bigoplus_{k,\alpha} U(\scA)_{k,\alpha}$ be the
  universal enveloping algebra of $\scA$. It inherits the
  $(\bZ \times A)$-grading since the Lie bracket is homogeneous. On
  $\hat U(\scA) \coloneqq \prod_{k,\alpha} U(\scA)_{k,\alpha}$, i.e.
  the completion of $U(\scA)$ with respect to the grading, consider
  the operator $\ad_z(-) \coloneqq [z, -]$ where
  $z \coloneqq \sum_{\beta: \tau(\beta)=\tau(\alpha)} z_\beta$. Then
  \[ \sum_{\substack{n\ge 0\\\alpha_1+\cdots+\alpha_n=\alpha\\\forall j: \tau(\alpha_j)=\tau(\alpha)}} C_{\alpha_1,\ldots,\alpha_n} = (k_1+k_2,\alpha)\text{-weight piece of } \left[e^{\ad_z} x, e^{\ad_z} y\right] \in \hat U(\scA). \]
  A standard combinatorial result in Lie theory says that $e^{\ad_u} v
  = e^u v e^{-u}$ for any $u, v \in \hat U(\scA)$ \cite[\S 3.9,
    Exercise 14]{Hall2015}, so
  \begin{equation} \label{eq:semistable-invariant-combinatorics}
    \left[e^{\ad_z} x, e^{\ad_z} y\right] = [e^z x e^{-z}, e^z y e^{-z}] = e^z [x, y] e^{-z} = 0.
  \end{equation}
  The hypothesis on $\tilde\chi$ implies $[x, y] = 0$, whence the last
  equality follows. We are done since $C_\emptyset = [x, y] = 0$ by
  definition, and $C_{\alpha_1} = 0$ by the Jacobi identity.
\end{proof}

\subsection{}

\begin{remark} \label{rem:artificial-lie-algebra}
  The reader may wonder whether it was truly necessary to introduce
  the Lie algebra $(\scA, [-, -])$, and to prove
  Corollary~\ref{cor:VW-semistable-invariants} via the more abstract
  Proposition~\ref{prop:semistable-invariants}. The answer is that a
  surprising amount of combinatorics is hidden in
  \eqref{eq:semistable-invariant-combinatorics} and the quantum
  integer identity \eqref{eq:q-integer-jacobi-identity} which made
  $[-, -]$ into a Lie bracket. One could directly substitute
  \eqref{eq:VW-semistable-invariants} into
  \eqref{eq:VW-invariants-relation} and the first half of the proof of
  Proposition~\ref{prop:semistable-invariants} would work equally
  well, but then one must prove the vanishing of the sum of the
  combinatorial coefficients
  \begin{equation} \label{eq:numerical-combinatorics}
    \begin{aligned}
      C_{\alpha_1,\ldots,\alpha_n}
      &= \sum_{m=0}^n \binom{n}{m} \Big[\sum_{i=1}^m b_i - \sum_{j=m+1}^n a_j + \sum_{i=1}^m \sum_{j=m+1}^n c_{ij} \Big]_t \\
      &\qquad\qquad\qquad\prod_{i=1}^m [a_i - \sum_{k=1}^{i-1} c_{k,i}]_t \prod_{j=m+1}^n [b_j - \sum_{k=m+1}^{j-1} c_{k,j}]_t,
    \end{aligned}
  \end{equation}
  where $a_i \coloneqq \lambda_{k_1}(\alpha_i)$ and
  $b_j \coloneqq \lambda_{k_2}(\alpha_j)$ and
  $c_{ij} \coloneqq \chi(\alpha_i, \alpha_j)$. In fact, it is true for
  arbitrary integers $a_i$, $b_j$, and $c_{ij} = -c_{ji}$ that
  \[ \sum_{\sigma \in S_n} C_{\alpha_{\sigma(1)}, \ldots, \alpha_{\sigma(n)}} = 0, \]
  but it appears fairly messy and complicated to use quantum integer
  identities to rearrange the terms and identify which ones cancel
  with which. \footnote{A manageable case is when all the $c_{ij}$ are
    zero, which occurs if $\cO_S(1)$ is generic for $\alpha$. Then the
    terms in \eqref{eq:numerical-combinatorics} for
    $C_{\alpha_1,\ldots,\alpha_n}$ are invariant under
    $S_m \times S_{n-m}$, and so the sum may be rewritten as
    $\sum_{I \sqcup J = \{1, \ldots, n\}}$, and a single application
    of the quantum integer addition formula suffices. Details are left
    to the interested reader.} The Lie bracket formalism helps to
  disentangle the quantum integer identities from the rearrangement
  cancellations.
\end{remark}

\phantomsection
\addcontentsline{toc}{section}{References}

\begin{small}
\bibliographystyle{alpha}
\bibliography{VW-invars}
\end{small}

\end{document}